\documentclass{scrartcl}

\usepackage[naustrian, english]{babel}
\usepackage{hyperref}
\usepackage[T1]{fontenc}
\usepackage{lmodern}
\usepackage[utf8]{inputenc}
\usepackage{amsmath}
\usepackage{amssymb}
\usepackage{amsthm}
\usepackage[babel]{csquotes}
\usepackage{tikz}
\usepackage{enumitem}
\usepackage{bm}
\usepackage{upgreek}
\usepackage{subfigure}

\newcommand{\p}[1]{\ensuremath{\mathord{\left(#1\right)}}}
\newcommand{\abs}[1]{\ensuremath{\left\vert#1\right\vert}}
\newcommand{\set}[1]{\ensuremath{\left\lbrace #1 \right\rbrace}}
\newcommand{\setcond}[2]{\ensuremath{\left\lbrace #1 \,\middle\vert \, #2 \right\rbrace}}
\newcommand{\inpr}[2]{\ensuremath{\left\langle #1, #2 \right\rangle}}
\newcommand{\RR}{\ensuremath{\mathbb{R}}}

\newcommand{\clint}[2]{\ensuremath{\left[#1, #2\right]}}
\newcommand{\opint}[2]{\ensuremath{\left(#1, #2\right)}}
\newcommand{\coint}[2]{\ensuremath{\left[#1, #2\right)}}

\newcommand{\defeq}{\ensuremath{\mathrel{\mathop:}=}}

\newcommand{\RRc}{\ensuremath{\overline{\mathbb{R}}}}
\newcommand{\toset}{\ensuremath{\rightrightarrows}}
\newcommand{\Hh}{\mathcal{H}}

\newcommand{\ee}{\mathrm{e}}

\newcommand{\norm}[1]{\left\Vert #1 \right\Vert}
\newcommand{\weakto}{\rightharpoonup}

\newcommand{\dd}[1]{\mathop{\mathrm{d} #1}}

\newcommand{\Id}{\ensuremath{\mathrm{Id}}}

\let \cb \c

\DeclareMathOperator{\dom}{dom}
\DeclareMathOperator{\zer}{Zer}

\DeclareMathOperator{\cl}{cl}

\DeclareMathOperator*{\argmin}{arg\,min}

\DeclareMathOperator{\Graph}{Graph}
\DeclareMathOperator{\Dom}{Dom}
\DeclareMathOperator{\Ran}{Ran}
\DeclareMathOperator{\Proj}{Proj}
\DeclareMathOperator{\Prox}{Prox}

\theoremstyle{plain}
\ifcsdef{lemma}{\relax}{\newtheorem{lemma}{Lemma}}
\ifcsdef{theorem}{\relax}{\newtheorem{theorem}{Theorem}}
\ifcsdef{corollary}{\relax}{\newtheorem{corollary}{Corollary}}
\ifcsdef{proposition}{\relax}{}

\theoremstyle{definition}
\ifcsdef{definition}{\relax}{\newtheorem{definition}{Definition}}
\ifcsdef{example}{\relax}{}
\ifcsdef{remark}{\relax}{\newtheorem{remark}{Remark}}
\ifcsdef{problem}{\relax}{}
\ifcsdef{algorithm}{\relax}{}

\title{A forward-backward-forward differential equation and its asymptotic properties}
\author{Sebastian Banert\thanks{University of Vienna, Faculty of Mathematics, Oskar-Morgenstern-Platz 1, 1090 Vienna, Austria, sebastian.banert@univie.ac.at} \and 
Radu Ioan Bo\c{t}\thanks{University of Vienna, Faculty of Mathematics, Oskar-Morgenstern-Platz 1, 1090 Vienna, Austria, radu.bot@univie.ac.at.}}

\begin{document}
\maketitle

\noindent \textbf{Abstract.}
In this paper, we approach the problem of finding  the zeros of the sum of a maximally monotone operator and a monotone and Lipschitz continuous one in a real Hilbert space via an implicit forward-backward-forward dynamical system
with nonconstant relaxation parameters and stepsizes of the resolvents. Besides proving existence and uniqueness of strong global solutions for the differential equation under consideration, 
we show weak convergence of the generated trajectories and, under strong monotonicity assumptions, strong convergence with exponential rate. In the particular setting of minimizing the sum of a proper, convex and
lower semicontinuous function with a smooth convex one, we provide a rate for the convergence of the objective function along the ergodic trajectory to its minimum value.\vspace{1ex}

\noindent \textbf{Key Words.}  implicit dynamical system, continuous forward-backward-forward method, Lyapunov analysis, monotone inclusions, convex optimization \vspace{1ex}

\noindent \textbf{AMS subject classification.} 34G25, 47H05, 90C25

\section{Introduction}\label{sec1}
In this paper, we address the monotone inclusion problem
\begin{equation}\label{monpr}
\mbox{find} \ \bar x \in \Hh \ \mbox{such that} \ 0 \in A\bar x + B\bar x, 
 \end{equation}
where $\Hh$ is a real Hilbert space, $A: \Hh \toset \Hh$ is a maximally monotone operator and $B: \Hh \rightarrow \mathbb{R}$ is a monotone and $\frac{1}{\beta}$-Lipschitz continuous operator for $\beta > 0$, by means of 
the dynamical system of equations
\begin{equation}\label{eq:CTseng} \left\{
\begin{array}{rl}
  z\p{t} \!\!\!&= J_{\gamma\p{t} A} \p{x\p{t} - \gamma\p{t} Bx\p{t}} \\
  0   \!\!\!&= \dot x\p{t} + x\p{t} - z\p{t} - \gamma\p{t} Bx\p{t} + \gamma\p{t} Bz\p{t}\\
  x\p{0} \!\!\!&= x_0, 
\end{array}\right.  
\end{equation}
where $\gamma: \coint{0}{+\infty} \to \opint{0}{\beta}$ is a Lebesgue measurable function, $x_0 \in \Hh$ and $J_{\gamma(t)A}$ denotes the resolvent 
of the operator $\gamma(t)A$ for every $t \in [0,+\infty)$.

The pioneering work \cite{CrandallPazy:1969} of Crandall and Pazy represented a cornerstone in the study of dynamical systems governed
by maximally monotone operators in Hilbert spaces, as it addressed questions like the existence and uniqueness of
solution trajectories and it related the latter to the theory of semi-groups of nonlinear contractions. Brezis has studied in \cite{Brezis:1971} the asymptotic behavior of the 
trajectories whenever the underlying operator is the convex subdifferential and Bruck proved in \cite{Bruck:1975} that a similar asymptotic convergence analysis can be made also in the general
case involving an arbitrary maximally monotone operator. 

Dynamical systems governed by maximally monotone operators  are recognized as valuable tools for studying numerical algorithms for monotone inclusions and optimization problems 
obtained by time discretization of the continuous dynamics (cf. \cite{PeypouquetSorin:2010}). In this context we want to refer to the discrete forward-backward-forward algorithm
(see \cite{BauschkeCombettes:2011, Tseng:2000}) which generates for an initial point $x_0 \in \Hh$ and a sequence of stepsizes $(\gamma_n)_{n\geq 0} \subseteq (0,\beta)$, 
via the iterative scheme
\begin{equation}\label{eq:DTseng} 
(\forall n \geq 0) \ \left\{
\begin{aligned}
  z_n &:= J_{\gamma_n A} \p{x_n - \gamma_n Bx_n}  \\
  x_{n+1} &:= z_n + \gamma_n \p{Bx_n - Bz_n}, 
\end{aligned}
\right.
\end{equation}
two sequences $(x_n)_{n\geq 0}$ and $(z_n)_{n\geq 0}$ that converge to a solution of the monotone inclusion problem \eqref{monpr}.

Since they provide a deep understanding of the related discrete iterative schemes, 
dynamical systems assuming backward (implicit) evaluations of the governing operators have enjoyed much attention in the last years. 
Abbas and Attouch addressed in \cite{AbbasAttouch:2014} a forward-backward dynamical system associated to the solving of \eqref{monpr} for $A$ the convex
subdifferential of a proper, convex and lower semicontinuous function and $B$ a cocoercive operator, extending in this way the investigations
made by Bolte in \cite{Bolte:2003} on a gradient-projected dynamical system associated to the constrained minimization of a smooth convex function. 
The study in \cite{AbbasAttouch:2014} has been further extended in \cite{BotCsetnek:2015a}, 
this time for an arbitrary maximally monotone operator $A$ and also by utilizing variable relaxation parameters, a fact which permitted the derivation of convergence rates for 
the fixed point residual of the generated trajectories. Recently, in \cite{BotCsetnek:2015b}, the monotone inclusion problem \eqref{monpr} for $B$ cocoercive has been approached in terms of a 
second order dynamical system of forward-backward type with variable relaxation parameters and anisotropic damping/variable damping parameters 
(see also \cite{Antipin:1994, AttouchAlvarez:2000}). For more literature addressing dynamical systems of implicit type we refer the reader to \cite{AbbasAttouchSvaiter:2014, 
AttouchMarquesSvaiter:2015, AttouchSvaiter:2011, BotCsetnek:2015c}.

In the first part of the present manuscript we prove the existence of strong global solutions for the dynamical system \eqref{eq:CTseng} by making use the classical Cauchy--Lipschitz--Picard Theorem. This is followed by 
a convergence analysis for the generated trajectories. We show that  that $x(t)$ converges weakly, as $t \rightarrow +\infty$, to a solution of the monotone inclusion problem \eqref{monpr} under mild assumptions. 
We also show that, whenever $A + B$ is strongly monotone, the trajectories converge strongly with exponential rate. 

In the last part of the work we deal with the optimization problem
$$\mbox{minimize} \ f(x) + h(x),$$
where $f : \Hh \to \RRc$ is a proper, convex and lower semicontinuous function and $h: \Hh \to \RR$ is a convex differentiable one with Lipschitz continuous gradient, by taking into consideration that its set of minimizers 
is nothing else than the solution set of the monotone inclusion problem
\begin{equation*}
\mbox{find} \ \bar x \in \Hh \ \mbox{such that} \ 0 \in \partial f(\bar x) + \nabla h(\bar x). 
 \end{equation*}
We provide a rate of convergence for the objective function $f+h$ along the ergodic trajectories generated by \eqref{eq:CTseng} (for $A=\partial f$ and $B=\nabla h$) to its minimum value.

\section{Preliminaries}\label{sec2}
In this section we introduce some preliminary notions and recall some fundamental results that we will use throughout the paper. Let $\Hh$ be a real Hilbert space. 
A \emph{set-valued operator} $M: \Hh \toset \Hh$ maps points of $\Hh$ to subsets of $\Hh$. We denote by
\begin{align*}
  \Dom M &\defeq \setcond{x\in\Hh}{Mx \neq\emptyset}, \\
  \Ran M &\defeq \setcond{y\in\Hh}{\exists x\in \Hh: y\in Mx}, \\
  \Graph M &\defeq \setcond{\p{x, y}\in \Hh \times \Hh}{y \in Mx}, \\
  \zer M &\defeq \setcond{x\in\Hh}{0 \in Mx}
\end{align*}
its \emph{domain}, \emph{range}, \emph{graph} and \emph{zeros}, respectively. The \emph{inverse} operator of $M$ is defined by $M^{-1} y = \setcond{x\in \Hh}{y \in Mx}$, 
the \emph{multiplication by a scalar} $\lambda \in \RR$ by $\p{\lambda M}x = \setcond{\lambda y}{y\in Mx}$, and the \emph{sum} with another operator $B : \Hh \toset \Hh$ via Minkowski sums by
$\p{M + N}x = \setcond{a + b}{a \in Mx \text{ and }b\in Nx}$.

A set-valued operator $M: \Hh \toset \Hh$ is called \emph{monotone} if
\[
  \inpr{x - y}{x^* - y^*} \geq 0 \qquad \text{for all } x, y \in \Hh \text{ and } x^* \in Mx, y^* \in My.
\]
It is called \emph{maximally monotone} if it is monotone and there is no monotone operator whose graph contains $\Graph M$ properly. It is said to be \emph{$\rho$-strongly monotone} with $\rho>0$
\[
\langle x-y,u-v\rangle\geq \gamma\|x-y\|^2 \qquad \text{for all } x, y \in \Hh \text{ and } x^* \in Mx, y^* \in My.
\]
Notice that if $M$ is maximally monotone and strongly monotone, then $\zer M$ is a singleton, thus nonempty (see \cite[Corollary 23.37]{BauschkeCombettes:2011}).

The \emph{resolvent} $J_{\gamma M} = \p{\Id + \gamma M}^{-1}$ 
of the maximally monotone operator $\gamma M$ for $\gamma > 0$ is a single-valued operator with $\Dom J_{\gamma M} = \Hh$ and it is \emph{firmly nonexpansive}, i.e.,
\[
  \norm{J_{\gamma M} x - J_{\gamma M} y}^2 \leq \inpr{J_{\gamma M} x - J_{\gamma M} y}{x - y} \qquad \text{for all } x, y \in \Hh.
\]
Here, $\Id : \Hh \rightarrow \Hh$ denotes the \emph{identity operator} on $\Hh$. The \emph{Yosida approximation} of a maximally monotone operator $M$ with parameter $\gamma > 0$ is 
defined by $M_\gamma \defeq \frac{1}{\gamma} \p{\Id - J_{\gamma M}}$. It is $\frac{1}{\gamma}$-Lipschitz continuous, and it holds
\[
  x \in \zer M \iff J_{\gamma M} x = x \iff x \in \zer M_\gamma x.
\]
According to \cite[Proposition 23.28]{BauschkeCombettes:2011} we have the relation
\begin{equation}\label{ineqres}
  \norm{J_{\lambda M}x - J_{\mu M}x} \leq \abs{\lambda - \mu} \norm{M_{\lambda} x} \ \forall \lambda, \mu > 0 \ \forall x \in \Hh.
\end{equation}
Let $\beta>0$ be arbitrary. A single-valued operator $M: \Hh \rightarrow \Hh$ is said to be \emph{$\beta$-cocoercive}, if 
$\langle x-y,Mx-My\rangle\geq \beta\|Mx-My\|^2$ for all $(x,y)\in \Hh \times \Hh$, and \textit{$\frac{1}{\beta}$-Lipschitz continuous}, 
if $\|Mx-My\|\leq \frac{1}{\beta} \|x-y\|$ for all $(x,y)\in \Hh \times \Hh$. Obviously, every $\beta$-cocoercive operator is monotone and $\frac{1}{\beta}$-Lipschitz continuous, however, the opposite implication is not true.

A function $f: \Hh \to \RRc \defeq \RR \cup \set{\pm \infty}$ is said to be \emph{proper} if it does not take the value $-\infty$ and $\dom f \defeq \setcond{x\in \Hh}{f\p{x} < +\infty} \neq \emptyset$. It is called \emph{convex} if
\[
  f\p{\p{1 - \lambda}x + \lambda y} \leq \p{1 - \lambda} f\p{x} + \lambda f\p{y} \ \forall \lambda \in [0,1] \ \forall x, y \in \Hh.
\]
The \emph{conjugate function} $f^*: \Hh \to \RRc$ is defined by
\[
  f^*\p{x^*} = \sup\setcond{\inpr{x^*}{x} - f\p{x}}{x\in \Hh}
\]
and it is convex and lower semicontinuous. If $f$ is proper, convex and lower semicontinuous, then $f^*$ is also proper.

The \emph{convex subdifferential} of $f$ is defined by
\[
  \partial f \p{x} = \setcond{x^*\in \Hh}{\forall y\in \Hh: f\p{y} \geq f\p{x} + \inpr{x^*}{y - x}}
\]
for $f(x) \in \RR$ and $\partial f\p{x} = \emptyset$, otherwise. It is a set-valued monotone operator $\partial f: \Hh \toset \Hh$, which is maximally monotone if $f$ is proper, convex and lower semicontinuous.

We close this section by stating the solution concept we consider for the dynamical system \eqref{eq:CTseng}. 

\begin{definition}\label{abs-cont} \rm (see for instance \cite{AttouchSvaiter:2011, AbbasAttouchSvaiter:2014}) A function $x:[0,b]\rightarrow {\cal H}$ (where $b>0$) is said to be absolutely continuous if one of the 
following equivalent properties holds:

(i)  there exists an integrable function $y:[0,b]\rightarrow {\cal H}$ such that $$x(t)=x(0)+\int_0^t y(s)ds \ \ \forall t\in[0,b];$$

(ii) $x$ is continuous and its distributional derivative is Lebesgue integrable on $[0,b]$; 

(iii) for every $\varepsilon > 0$, there exists $\eta >0$ such that for any finite family of intervals $I_k=(a_k,b_k) \subseteq [0,b]$ we have the implication
$$\left(I_k\cap I_j=\emptyset \mbox{ and }\sum_k|b_k-a_k| < \eta\right)\Longrightarrow \sum_k\|x(b_k)-x(a_k)\| < \varepsilon.$$
\end{definition}

\begin{remark}\label{rem-abs-cont}\rm (a) It follows from the above definition that an absolutely continuous function on $[0,b]$ is differentiable almost 
everywhere, its derivative coincides with its distributional derivative almost everywhere and one can recover the function from its derivative $\dot x=y$ by the integration formula (i). 

(b) If $x:[0,b]\rightarrow {\cal H}$ (where $b>0$) is absolutely continuous and $M: \Hh \rightarrow \Hh$ is a $\gamma$-Lipschitz continuous operator for $\gamma > 0$, 
then the function $z=M\circ x$ is absolutely continuous, too. This follows from the characterization of absolute continuity given in
Definition \ref{abs-cont}(iii). Moreover, $z$ is almost everywhere differentiable and the inequality $\|\dot z (\cdot)\|\leq \gamma \|\dot x(\cdot)\|$ holds almost everywhere.   
\end{remark}

\begin{definition}\label{str-sol}\rm We say that $x:[0,+\infty)\rightarrow {\cal H}$ is a strong global solution of \eqref{eq:CTseng} if the 
following properties are satisfied: 

(i) $x:[0,+\infty)\rightarrow {\cal H}$ is \emph{locally absolutely continuous}, that is, absolutely continuous on each interval $[0,b]$ for $0<b<+\infty$; 

(ii) For almost every $t\in[0,+\infty)$ it holds $\dot x\p{t} + x\p{t} - z\p{t} - \gamma\p{t} Bx\p{t} + \gamma\p{t} Bz\p{t} = 0$, where $z\p{t} = J_{\gamma\p{t} A} \p{x\p{t} - \gamma\p{t} Bx\p{t}}$;

(iii) $x(0)=x_0$.
\end{definition}

\section{Existence and uniqueness of trajectories}\label{sec3}
In this section we investigate the existence and uniqueness of the trajectories generated by the dynamical system \eqref{eq:CTseng}. To this end we notice that the latter can be written 
as a non-autonomous differential equation
\[
  \dot x\p{t} = J_{\gamma\p{t}A}\p{x\p{t} - \gamma\p{t} Bx\p{t}} - x\p{t} + \gamma\p{t} Bx\p{t} - \gamma\p{t} \p{B \circ J_{\gamma\p{t} A}}\p{x\p{t} - \gamma\p{t} Bx\p{t}}
\]
or, equivalently, as 
$$\dot x\p{t} = f\p{\gamma\p{t}, x\p{t}},$$ 
with $f : (0,+\infty) \times \Hh \rightarrow \RR$,
\begin{align*}
  f\p{\gamma, x} \defeq& \, J_{\gamma A} \p{x - \gamma Bx} - x + \gamma Bx - \gamma \p{B \circ J_{\gamma A}}\p{x - \gamma Bx} \\
  =& \, \p{\p{\Id - \gamma B} \circ J_{\gamma A} \circ \p{\Id - \gamma B} - \p{\Id - \gamma B}}x.
\end{align*}

\begin{lemma}\label{lem1}
  Let $x\in \Hh$ be fixed. Then the function $\gamma \mapsto f\p{\gamma, x}$ is continuous on \opint{0}{+\infty}. Moreover, if $x \in \Dom A$,
  \[
    \lim_{\gamma \downarrow 0} f\p{\gamma, x} = 0.
  \]
\end{lemma}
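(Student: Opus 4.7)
The plan is to decompose $f(\gamma, x)$ into the resolvent term $J_{\gamma A}(x - \gamma Bx)$, the affine-in-$\gamma$ term $-x + \gamma Bx$, and the trailing term $-\gamma (B \circ J_{\gamma A})(x - \gamma Bx)$, and to reduce everything to the behavior of the first of these. The middle term is continuous in $\gamma$ and tends to $-x$ as $\gamma \downarrow 0$, while the trailing term is handled via Lipschitz continuity of $B$ once the resolvent piece is controlled. The main subtlety is choosing the correct direction for inequality \eqref{ineqres}: placing the fixed parameter on the Yosida side turns that factor into a constant, which avoids having to argue local boundedness of $\norm{A_\gamma u}$ as $\gamma \to \gamma_0$.

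\textbf{Continuity.} I would fix $\gamma_0 \in (0, +\infty)$ and split
\[
  J_{\gamma A}(x - \gamma Bx) - J_{\gamma_0 A}(x - \gamma_0 Bx)
  = \bigl[J_{\gamma A}(x - \gamma Bx) - J_{\gamma A}(x - \gamma_0 Bx)\bigr] + \bigl[J_{\gamma A}(x - \gamma_0 Bx) - J_{\gamma_0 A}(x - \gamma_0 Bx)\bigr].
\]
Nonexpansiveness of $J_{\gamma A}$ bounds the first bracket by $|\gamma - \gamma_0|\,\norm{Bx}$. Applying \eqref{ineqres} with $\lambda = \gamma_0$ and $\mu = \gamma$ bounds the second bracket by $|\gamma - \gamma_0|\,\norm{A_{\gamma_0}(x - \gamma_0 Bx)}$, where the Yosida factor is constant in $\gamma$. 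Both vanish as $\gamma \to \gamma_0$; combined with the Lipschitz continuity of $B$, this yields continuity of $\gamma \mapsto f(\gamma, x)$ on $(0, +\infty)$.

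\textbf{Limit as $\gamma \downarrow 0$ for $x \in \Dom A$.} I would pick any $v \in Ax$. The defining property of the resolvent gives $J_{\gamma A}(x + \gamma v) = x$, so nonexpansiveness yields $\norm{J_{\gamma A}(x) - x} \leq \gamma\,\norm{v}$. Combining with $\norm{J_{\gamma A}(x - \gamma Bx) - J_{\gamma A}(x)} \leq \gamma\,\norm{Bx}$ shows $J_{\gamma A}(x - \gamma Bx) \to x$. The terms $\gamma Bx$ and $\gamma (B \circ J_{\gamma A})(x - \gamma Bx)$ both vanish as $\gamma \downarrow 0$ --- the latter because its $B$-argument stays bounded by Lipschitz continuity of $B$ and the convergence just established. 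Summing the three pieces gives $f(\gamma, x) \to 0$.
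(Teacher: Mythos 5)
Your proof is correct. For the continuity on $\opint{0}{+\infty}$ you carry out in detail what the paper dismisses as a direct consequence of \eqref{ineqres}, and your choice to apply \eqref{ineqres} with the Yosida approximation evaluated at the fixed parameter $\gamma_0$ is exactly the right way to make that one-line claim rigorous without any auxiliary boundedness argument for $\norm{A_\gamma u}$ as $\gamma \to \gamma_0$. The one genuine divergence is in the limit $\gamma \downarrow 0$: the paper establishes $J_{\gamma A}x \to x$ by citing Brezis's result that $J_{\gamma A}x \to \Proj_{\cl\p{\Dom A}}x$, using that $\cl\p{\Dom A}$ is closed and convex, whereas you derive it from scratch by picking $v \in Ax$, observing $J_{\gamma A}\p{x + \gamma v} = x$, and invoking nonexpansiveness to get $\norm{J_{\gamma A}x - x} \leq \gamma\norm{v}$. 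Your route is self-contained, uses exactly the hypothesis $x \in \Dom A$ (which is precisely what makes a representative $v \in Ax$ available), and in fact yields the quantitative estimate $\norm{f\p{\gamma,x}} \leq \gamma\p{\norm{v} + 2\norm{Bx} + o(1)}$, which is stronger than the mere convergence asserted in the lemma; the paper's citation is shorter and would also identify the limit of $J_{\gamma A}x$ for points outside $\Dom A$, though that generality is not needed here. Both arguments conclude identically via the Lipschitz continuity of $B$.
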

\begin{proof}
  The first statement is a direct consequence of \eqref{ineqres}. Let $x\in \Dom A$. By nonexpansiveness of $J_{\gamma A}$ we have
  \[
    \norm{J_{\gamma A} \circ \p{\Id - \gamma B} x - J_{\gamma A} x} \leq \gamma\norm{Bx} \ \forall \gamma > 0.
  \]
On the other hand, $J_{\gamma A} x \to \Proj_{\cl\p{\Dom A}} x = x$ as $\gamma \to 0$ by \cite[Th\'eor\`eme 2.2]{Brezis:1973}, where $\Proj$ denotes the projection operator and one
uses that ${\cl\p{\Dom A}}$ is a convex and closed set. Hence $J_{\gamma A} \circ \p{\Id - \gamma B}x \to x$ as $\gamma \to 0$ and the assertion follows from the Lipschitz continuity of $B$.
\end{proof}

\begin{lemma}\label{lem:TsengLipschitz}
For each $\gamma \in (0,\beta)$ and $x, y\in \Hh$ it holds 
\[
 \norm{f\p{\gamma, x} - f\p{\gamma, y}} \leq \sqrt{6} \norm{x - y}.
\]
\end{lemma}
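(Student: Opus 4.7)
The plan is to introduce the shortcuts $u = x - \gamma Bx$, $v = y - \gamma By$, $p = J_{\gamma A}u$, $q = J_{\gamma A}v$, so that
\[
  f\p{\gamma, x} - f\p{\gamma, y} = \alpha - \beta,
\]
where $\alpha \defeq \p{p - q} - \gamma\p{Bp - Bq}$ and $\beta \defeq \p{x - y} - \gamma\p{Bx - By} = u - v$. I would then expand
\[
  \norm{f\p{\gamma, x} - f\p{\gamma, y}}^2 = \norm{\alpha}^2 + \norm{\beta}^2 - 2\inpr{\alpha}{\beta}
\]
and bound the three pieces separately.

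For the two squared-norm pieces I would exploit monotonicity of $B$, which kills the linear cross term in each expansion and yields $\norm{\alpha}^2 \leq \p{1 + \gamma^2/\beta^2}\norm{p - q}^2$ and $\norm{\beta}^2 \leq \p{1 + \gamma^2/\beta^2}\norm{x - y}^2$; using $\gamma < \beta$, the first of these is in particular bounded by $2\norm{p-q}^2$. For the inner product I would write $\inpr{\alpha}{\beta} = \inpr{p-q}{\beta} - \gamma\inpr{Bp - Bq}{\beta}$ and invoke the firm nonexpansiveness of $J_{\gamma A}$ applied to $u, v$, which gives
\[
  \inpr{p - q}{\beta} = \inpr{J_{\gamma A}u - J_{\gamma A}v}{u - v} \geq \norm{p - q}^2.
\]
The remaining term I would handle by the Cauchy--Schwarz and Young inequalities together with $\norm{Bp - Bq} \leq \norm{p - q}/\beta$ and $\gamma/\beta < 1$, arriving at an estimate of the form $-2\inpr{\alpha}{\beta} \leq -\norm{p-q}^2 + \norm{\beta}^2$.

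Collecting everything would produce $\norm{f\p{\gamma, x} - f\p{\gamma, y}}^2 \leq \norm{p - q}^2 + 2\norm{\beta}^2$, which, using nonexpansiveness of $J_{\gamma A}$ (so $\norm{p - q} \leq \norm{\beta}$) and the above bound $\norm{\beta}^2 \leq 2\norm{x - y}^2$, yields $\norm{f\p{\gamma, x} - f\p{\gamma, y}}^2 \leq 3\norm{\beta}^2 \leq 6\norm{x - y}^2$, which is the required inequality. The delicate point — and what distinguishes the argument from a plain triangle-inequality chain that would give the much larger constant $6$ — is the cross-term estimate: a naive Cauchy--Schwarz bound $-2\inpr{\alpha}{\beta} \leq \norm{\alpha}^2 + \norm{\beta}^2$ is too lossy, and one has to use firm nonexpansiveness of the resolvent together with monotonicity of $B$ to generate a genuinely negative coefficient on $\norm{p - q}^2$ that absorbs the corresponding positive contribution from $\norm{\alpha}^2$.
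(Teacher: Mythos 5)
Your argument is correct and is essentially the paper's own proof in different bookkeeping: both expand $\norm{f\p{\gamma,x}-f\p{\gamma,y}}^2$ for the same decomposition into the resolvent part and the forward part, and both obtain the constant $6$ from firm nonexpansiveness of $J_{\gamma A}$, monotonicity and $\tfrac{1}{\beta}$-Lipschitz continuity of $B$, and $\gamma<\beta$ (the paper absorbs the cross term via $\p{1+\gamma^2/\beta^2-2}\inpr{Cx-Cy}{J\circ Cx-J\circ Cy}\le 0$ where you instead use Young's inequality, but the ingredients and the final bound $\p{1+2\gamma/\beta}\p{1+\gamma^2/\beta^2}\le 6$ versus your $\norm{p-q}^2+2\norm{u-v}^2\le 6\norm{x-y}^2$ are interchangeable). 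One cosmetic point: you overload the symbol $\beta$ as both the Lipschitz parameter and the vector $u-v$; rename the latter before writing this up.
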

\begin{proof}
  For the sake of brevity, let us write $C \defeq \Id - \gamma B$ and $J \defeq J_{\gamma A}$. By using the firm nonexpansiveness of the resolvent and the monotonicity and 
  Lipschitz continuity of $B$ we get
  \begin{align*}
    &\mathrel{\phantom{=}} \norm{f\p{\gamma, x} - f\p{\gamma, y}}^2 \\
    &= \norm{C\circ J \circ Cx - Cx - C\circ J \circ Cy + Cy}^2 \\
    &= \norm{C\circ J \circ Cx - C\circ J \circ Cy}^2 + \norm{Cx - Cy}^2 - 2\inpr{C \circ J \circ Cx - C\circ J\circ Cy}{Cx - Cy} \\
    &= \norm{J\circ Cx - J\circ Cy}^2 + \gamma^2\norm{B \circ J \circ Cx - B\circ J \circ Cy}^2\\ 
    &\qquad \mathop{-} 2\gamma\inpr{J \circ Cx - J\circ Cy}{B \circ J \circ Cx - B\circ J \circ Cy} \\
    &\qquad \mathop{+} \norm{Cx - Cy}^2 - 2\inpr{C\circ J \circ Cx - C\circ J \circ Cy}{Cx - Cy} \\
    &\leq \p{1 + \frac{\gamma^2}{\beta^2}} \inpr{Cx - Cy}{J\circ Cx - J \circ Cy} \\
    &\qquad \mathop{-} 2\gamma\inpr{J \circ Cx - J\circ Cy}{B \circ J \circ Cx - B\circ J \circ Cy} \\
    &\qquad \mathop{+} \norm{Cx - Cy}^2 - 2\inpr{C\circ J \circ Cx - C\circ J \circ Cy}{Cx - Cy} \\
    &= \p{1 + \frac{\gamma^2}{\beta^2} - 2} \inpr{Cx - Cy}{J \circ Cx - J \circ Cy}  \\
    &\qquad \mathop{-} 2\gamma\inpr{J \circ Cx - J\circ Cy}{B \circ J \circ Cx - B\circ J \circ Cy} \\
    &\qquad \mathop{+} \norm{Cx - Cy}^2 + 2\gamma\inpr{B\circ J \circ Cx - B\circ J \circ Cy}{Cx - Cy} \\
    &\leq \norm{Cx - Cy}^2 + 2\gamma\norm{B\circ J \circ Cx - B\circ J \circ Cy} \norm{Cx - Cy} \\
    &\leq \p{1 + \frac{2\gamma}{\beta}} \norm{Cx - Cy}^2 \\
    &= \p{1 + \frac{2\gamma}{\beta}}\p{\norm{x - y}^2 + \gamma^2 \norm{Bx - By}^2 - 2\gamma \inpr{x - y}{Bx - By}} \\
    &\leq \p{1 + \frac{2\gamma}{\beta}}\p{1 + \frac{\gamma^2}{\beta^2}} \norm{x - y}^2 \\
    &\leq 6\norm{x - y}^2.
  \end{align*}
\end{proof}

\begin{lemma}\label{lem:linearestimate}
  There exists a constant $K > 0$ such that
  \[
    \norm{f\p{\gamma, x}} \leq K \p{1 + \norm{x}}
  \]
  for every $\gamma \in \opint{0}{\beta}$ and $x \in \Hh$.
\end{lemma}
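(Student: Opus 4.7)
The plan is to reduce the linear growth bound to controlling $\|f(\gamma, x_0)\|$ for a single fixed point $x_0$, using the already-established Lipschitz continuity of $x \mapsto f(\gamma, x)$ from Lemma~\ref{lem:TsengLipschitz}. Concretely, the triangle inequality gives
\[
  \norm{f\p{\gamma, x}} \leq \norm{f\p{\gamma, x_0}} + \norm{f\p{\gamma, x} - f\p{\gamma, x_0}} \leq \norm{f\p{\gamma, x_0}} + \sqrt{6}\norm{x - x_0}
\]
for every $\gamma \in \opint{0}{\beta}$, so it suffices to find $x_0 \in \Hh$ such that $M \defeq \sup_{\gamma \in \opint{0}{\beta}} \norm{f\p{\gamma, x_0}}$ is finite; the desired constant can then be taken as $K \defeq \max\set{M + \sqrt{6}\norm{x_0}, \sqrt{6}}$.

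To secure such an $x_0$, I would pick any $x_0 \in \Dom A$ (which is non-empty since $A$ is maximally monotone). By Lemma~\ref{lem1}, the scalar map $\gamma \mapsto \norm{f\p{\gamma, x_0}}$ is continuous on $\opint{0}{+\infty}$ (in particular at $\gamma = \beta$) and tends to $0$ as $\gamma \downarrow 0$. Extending the map by the value $0$ at $\gamma = 0$ yields a continuous function on the compact interval $\clint{0}{\beta}$, which is therefore bounded; this gives $M < +\infty$.

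Combining the two observations, $\norm{f\p{\gamma, x}} \leq M + \sqrt{6}\norm{x_0} + \sqrt{6}\norm{x} \leq K\p{1 + \norm{x}}$ uniformly in $\gamma \in \opint{0}{\beta}$ and $x \in \Hh$, finishing the proof. There is no real obstacle here: the whole argument rests on the two previous lemmas, and the only point requiring a moment's care is making sure that continuity of $\gamma \mapsto f\p{\gamma, x_0}$ is available up to and including $\gamma = \beta$, which Lemma~\ref{lem1} indeed provides since it is stated on the full half-line $\opint{0}{+\infty}$.
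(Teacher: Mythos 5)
Your proof is correct and follows essentially the same route as the paper: fix a point in $\Dom A$, use Lemma~\ref{lem1} to extend $\gamma \mapsto f\p{\gamma, \bar x}$ continuously to the compact interval $\clint{0}{\beta}$ and hence bound it, then conclude via the Lipschitz estimate of Lemma~\ref{lem:TsengLipschitz} and the triangle inequality. The only cosmetic difference is that you make the choice of the constant $K$ explicit, which the paper leaves implicit.
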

\begin{proof}
We fix an element $\bar x \in \Dom A$ in the domain of $A$, which is evidently nonempty. According to Lemma \ref{lem1} the mapping $\gamma \mapsto f\p{\gamma, \bar x}$ can be continuously 
extended to $\gamma = 0$, therefore the image of $\clint{0}{\beta}$ under this extension is compact, hence bounded, say, $\norm{f\p{\gamma, \bar x}} \leq r$ for all $\gamma \in \opint{0}{\beta}$. Furthermore, by Lemma \ref{lem:TsengLipschitz} and the triangle inequaity,
  \begin{align*}
    \norm{f\p{\gamma, x}}
    &\leq \norm{f\p{\gamma, x} - f\p{\gamma, \bar x}} + \norm{f\p{\gamma, \bar x}} \\
    &\leq \sqrt{6} \norm{x - \bar x} + r \\
    &\leq \sqrt{6} \norm{\bar x} + r + \sqrt{6} \norm{x}. \qedhere
  \end{align*}
\end{proof}

Now we can state the existence and uniqueness statement. 
\begin{theorem}\label{thexun}
  Let $\gamma: \coint{0}{+\infty} \to \opint{0}{\beta}$ be measurable. 
 Then, for each $x_0 \in \Hh$, there exists a unique function $x: \coint{0}{+\infty} \to \Hh$ with $x\p{0} = x_0$, 
  which is locally absolutely continuous and $\dot x\p{t} = f\p{\gamma\p{t}, x\p{t}}$ for almost every $t \in [0,+\infty)$.
\end{theorem}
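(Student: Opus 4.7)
The plan is to invoke the Cauchy--Lipschitz--Picard theorem for non-autonomous Carath\'eodory differential equations in the Hilbert space $\Hh$, using the three preparatory lemmas to verify the hypotheses. Writing $F(t, x) \defeq f\p{\gamma\p{t}, x}$, I need to check that (a) for each fixed $x \in \Hh$ the map $t \mapsto F\p{t, x}$ is Lebesgue measurable; (b) there is a locally integrable (indeed, constant) function $L\p{t}$ such that $\norm{F\p{t, x} - F\p{t, y}} \leq L\p{t} \norm{x - y}$ for all $x, y \in \Hh$; (c) there is a locally integrable function controlling linear growth of $\norm{F\p{t, x}}$ in $x$ uniformly in $t$. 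Once these are in place, standard theory yields a unique strong global solution.

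First I would verify the Carath\'eodory conditions. Measurability in $t$ is immediate from the fact that $\gamma$ is Lebesgue measurable by assumption and $\gamma \mapsto f\p{\gamma, x}$ is continuous on $\opint{0}{+\infty}$ by Lemma \ref{lem1}; hence $t \mapsto F\p{t, x} = f\p{\gamma\p{t}, x}$ is measurable as the composition of a continuous function with a measurable one. The uniform Lipschitz estimate $\norm{F\p{t, x} - F\p{t, y}} \leq \sqrt{6} \norm{x - y}$ follows directly from Lemma \ref{lem:TsengLipschitz} since $\gamma\p{t} \in \opint{0}{\beta}$ for every $t$, and the linear growth $\norm{F\p{t, x}} \leq K\p{1 + \norm{x}}$ follows from Lemma \ref{lem:linearestimate}. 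In particular, both bounds are independent of $t$, so the associated control functions are constants, which are trivially locally integrable.

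Next I would establish local existence and uniqueness via Banach's fixed-point theorem applied to the integral operator
\[
  \p{\Phi x}\p{t} \defeq x_0 + \int_0^t F\p{s, x\p{s}} \dd{s}
\]
on the Banach space $C\p{\clint{0}{T}; \Hh}$ endowed with a Bielecki-type weighted sup-norm $\norm{x}_\ast \defeq \sup_{t\in\clint{0}{T}} \ee^{-\lambda t} \norm{x\p{t}}$ for $\lambda > \sqrt{6}$. The Bochner integral is well-defined thanks to the measurability of $s \mapsto F\p{s, x\p{s}}$ (continuity of $x$ composed with Carath\'eodory $F$) and the linear growth bound, and the weighted norm makes $\Phi$ a contraction, producing a unique local solution that is absolutely continuous and satisfies the differential equation almost everywhere.

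Finally, to pass from local to global existence I would use a standard Gronwall argument based on the linear growth bound from Lemma \ref{lem:linearestimate}: on any maximal existence interval $\coint{0}{T_{\max}}$, the inequality $\norm{x\p{t}} \leq \norm{x_0} + \int_0^t K\p{1 + \norm{x\p{s}}} \dd{s}$ together with Gronwall's lemma yields $\norm{x\p{t}} \leq \p{\norm{x_0} + KT_{\max}} \ee^{KT_{\max}}$, which prevents blow-up in finite time and hence forces $T_{\max} = +\infty$. I do not expect any serious obstacle; the only delicate point is the infinite-dimensional setting, which is handled transparently by working with Bochner integrals and the weighted-norm contraction argument, while the constants in Lemmas \ref{lem:TsengLipschitz} and \ref{lem:linearestimate} being independent of $t$ makes both the local contraction step and the global extension step essentially immediate.
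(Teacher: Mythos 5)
Your proposal is correct and follows essentially the same route as the paper: both verify the hypotheses of the Cauchy--Lipschitz--Picard theorem for $(t,x) \mapsto f\p{\gamma\p{t}, x}$ using Lemma \ref{lem1} (measurability in $t$), Lemma \ref{lem:TsengLipschitz} (uniform Lipschitz constant $\sqrt{6}$) and Lemma \ref{lem:linearestimate} (integrable linear growth). The only difference is that the paper cites the theorem as a black box (\cite[Proposition 6.2.1]{Haraux:1991}) whereas you additionally unpack its standard proof via a Bielecki-norm contraction and a Gronwall extension argument, which is fine but not needed.
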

\begin{proof}
The statement follows as a consequence of the Cauchy--Lipschitz--Picard Theorem (see \cite[Proposition 6.2.1]{Haraux:1991}) applied for the mapping $(t,x) \mapsto f(\gamma(t),x)$ under the use of the previous two lemmas. 
For arbitrary $x,y \in \Hh$ and every $t \in [0,+\infty)$, by Lemma \ref{lem:TsengLipschitz}, we have
$$\|f(\gamma(t),x) - f(\gamma(t),y)\| \leq  \sqrt{6} \norm{x - y}.$$
On the other hand, we recall that $\gamma \mapsto f\p{\gamma, x}$ is continuous on $(0,+\infty)$ for each $x\in \Hh$, so $t \mapsto f\p{\gamma\p{t}, x}$ is measurable, 
and it is bounded by Lemma \ref{lem:linearestimate}, thus locally integrable.
\end{proof}

\section{Convergence analysis}\label{sec4}

In order to investigate the asymptotic properties of \eqref{eq:CTseng} we need some inequalities which we derive in the next subsection.

\subsection{Some fundamental inequalities}\label{subsec41}

\begin{lemma}\label{lem:inclusions} If $x$ and $z$ are given by \eqref{eq:CTseng}, then, for almost every $t \in [0,+\infty)$, the following statements are true:
  \begin{enumerate}[label=(\alph*)]
    \item \label{item:lem:inclusions:A} $\frac{x\p{t} - z\p{t}}{\gamma\p{t}} - Bx\p{t} \in Az\p{t}$;
    \item \label{item:lem:inclusions:A+B} $\frac{x\p{t} - z\p{t}}{\gamma\p{t}} + Bz\p{t} - Bx\p{t} = -\frac{\dot x\p{t}}{\gamma\p{t}} \in \p{A + B} z\p{t}$.
  \end{enumerate}
\end{lemma}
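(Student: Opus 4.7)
The plan is essentially to unpack the definition of the resolvent and then substitute using the ODE; this is more a bookkeeping lemma than a lemma requiring an idea.

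For part \ref{item:lem:inclusions:A}, I would start from the first line of \eqref{eq:CTseng}. By definition of the resolvent, $z(t) = J_{\gamma(t)A}(x(t) - \gamma(t) Bx(t))$ is equivalent to
\[
  x(t) - \gamma(t) Bx(t) \in z(t) + \gamma(t) Az(t).
\]
Rearranging and dividing by $\gamma(t) > 0$ gives
\[
  \frac{x(t) - z(t)}{\gamma(t)} - Bx(t) \in Az(t),
\]
which is exactly the claim. Note this actually holds for \emph{every} $t \in [0,+\infty)$, not only almost every $t$, since $z(t)$ is defined pointwise via the resolvent.

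For part \ref{item:lem:inclusions:A+B}, I would simply add $Bz(t)$ to both sides of the inclusion from \ref{item:lem:inclusions:A}, obtaining
\[
  \frac{x(t) - z(t)}{\gamma(t)} + Bz(t) - Bx(t) \in Az(t) + Bz(t) = (A+B)z(t).
\]
To identify this expression with $-\dot x(t)/\gamma(t)$, I would invoke the second line of \eqref{eq:CTseng}, which holds for almost every $t$ by Definition \ref{str-sol}(ii); solving it for $\dot x(t)$ and dividing by $-\gamma(t)$ yields exactly $\frac{x(t)-z(t)}{\gamma(t)} + Bz(t) - Bx(t)$.

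There is no real obstacle here: the content of the lemma is just rewriting the defining equations of \eqref{eq:CTseng} in a form that exhibits an element of $Az(t)$ and an element of $(A+B)z(t)$. The only subtlety worth flagging is the measure-theoretic one, namely that the use of $\dot x(t)$ forces the ``almost every $t$'' qualifier in part \ref{item:lem:inclusions:A+B}, whereas the inclusion in \ref{item:lem:inclusions:A} in fact holds everywhere.
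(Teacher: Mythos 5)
Your proposal is correct and follows exactly the paper's own argument: part (a) is the resolvent definition of the first equation of \eqref{eq:CTseng} unpacked, and part (b) is obtained by adding $Bz\p{t}$ and invoking the second equation. Your remark that (a) holds for every $t$ while the ``almost every'' qualifier is only forced by the use of $\dot x\p{t}$ in (b) is a fair, if minor, refinement of the paper's statement.
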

\begin{proof}
The statement in \ref{item:lem:inclusions:A} is a reformulation of the first equation in \eqref{eq:CTseng}, while the one in \ref{item:lem:inclusions:A+B} follows by adding $Bz\p{t}$ to \ref{item:lem:inclusions:A}
and by using the second equation in \eqref{eq:CTseng}.
\end{proof}

\begin{lemma}\label{lemma5}
Let $x$ and $z$ be given by \eqref{eq:CTseng} and $\bar x \in \zer\p{A + B}$. Then, for almost every $t\in \coint{0}{+\infty}$,  we have
  \[
    0 \leq \norm{x\p{t} - \bar x}^2 - \norm{x\p{t} - z\p{t}}^2 - \norm{z\p{t} - \bar x}^2 + 2\gamma\p{t} \inpr{B\bar x - Bx\p{t}}{z\p{t} - \bar x}.
  \]
\end{lemma}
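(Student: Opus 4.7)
The plan is a short, direct calculation that combines Lemma~\ref{lem:inclusions}\ref{item:lem:inclusions:A} with the monotonicity of $A$ and a polarization-type identity.

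First, since $\bar x \in \zer(A+B)$ we have $-B\bar x \in A\bar x$. By Lemma~\ref{lem:inclusions}\ref{item:lem:inclusions:A}, for almost every $t \in [0,+\infty)$,
\[
  \frac{x(t) - z(t)}{\gamma(t)} - Bx(t) \in A z(t).
\]
So both pairs $\bigl(z(t),\, \tfrac{x(t)-z(t)}{\gamma(t)} - Bx(t)\bigr)$ and $(\bar x,\, -B\bar x)$ belong to $\Graph A$, and applying monotonicity of $A$ yields
\[
  \left\langle z(t) - \bar x,\; \frac{x(t) - z(t)}{\gamma(t)} - Bx(t) + B\bar x \right\rangle \geq 0.
\]

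Next I would multiply this inequality by $2\gamma(t) > 0$ and split it as
\[
  2 \langle z(t) - \bar x,\, x(t) - z(t) \rangle + 2\gamma(t)\langle z(t) - \bar x,\, B\bar x - Bx(t)\rangle \geq 0.
\]
Finally, the three-point identity
\[
  2\langle z(t) - \bar x,\, x(t) - z(t)\rangle = \norm{x(t) - \bar x}^2 - \norm{x(t) - z(t)}^2 - \norm{z(t) - \bar x}^2,
\]
which is obtained by expanding $\|x(t) - \bar x\|^2 = \|(x(t) - z(t)) + (z(t) - \bar x)\|^2$, transforms the previous inequality into exactly the claimed bound.

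There is no real obstacle here: the statement is essentially a packaging of the inclusion from Lemma~\ref{lem:inclusions}\ref{item:lem:inclusions:A} together with monotonicity of $A$ and the standard polarization identity. The only care needed is to make sure the Bruck-type cross term $2\gamma(t)\langle B\bar x - Bx(t), z(t) - \bar x\rangle$ ends up on the correct side with the correct sign, which is automatic once the monotonicity inequality is written as above.
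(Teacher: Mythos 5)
Your proof is correct and follows essentially the same route as the paper: apply Lemma~\ref{lem:inclusions}\ref{item:lem:inclusions:A} together with $-B\bar x \in A\bar x$, invoke the monotonicity of $A$, and expand the resulting inner product via the three-point identity. The only cosmetic difference is that the paper scales by $\gamma(t)$ before applying monotonicity (to $\gamma(t)A$) rather than after.
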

\begin{proof}
 As $-B\bar x \in A\bar x$, it holds $-\gamma\p{t} B\bar x \in \gamma\p{t} A\bar x$ for every $t \in [0, +\infty)$. By Lemma \ref{lem:inclusions}\ref{item:lem:inclusions:A} and the monotonicity of $A$, 
 for almost every $t\in \coint{0}{+\infty}$ it holds
  \begin{align*}
    0 &\leq 2\inpr{x\p{t} - \gamma\p{t} Bx\p{t} - z\p{t} + \gamma\p{t} B\bar x}{z\p{t} - \bar x} \\
    &= \norm{x\p{t} - \bar x}^2 - \norm{x\p{t} - z\p{t}}^2 - \norm{z\p{t} - \bar x}^2 + 2\gamma\p{t} \inpr{B\bar x - Bx\p{t}}{z\p{t} - \bar x}. \qedhere
  \end{align*}
\end{proof}

\begin{lemma}\label{lem:zderiv}
Let $x$ and $z$ be given by \eqref{eq:CTseng}, and let $\gamma:[0,+\infty) \rightarrow (0,\beta)$ be locally absolutely continuous. Then $z$ is locally absolutely continuous, and
  \[
    \norm{\dot z\p{t}} \leq \p{\sqrt{\p{1 + \frac{\dot \gamma\p{t}}{\gamma\p{t}}}^2 + \frac{\p{\gamma\p{t}}^2}{\beta^2}} + \frac{\gamma\p{t}}{\beta} \sqrt{1 + \frac{\p{\gamma\p{t}}^2}{\beta^2}}} \norm{x\p{t} - z\p{t}} \qedhere
  \]
for almost every $t\in \coint{0}{+\infty}$.
\end{lemma}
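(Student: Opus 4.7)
The plan is to first bound $\norm{z(t) - z(s)}$ via a resolvent identity, then to let $s \to t$ and invoke the dynamics \eqref{eq:CTseng} together with the monotonicity and Lipschitz continuity of $B$. Setting $w(r) \defeq x(r) - \gamma(r) Bx(r)$ and applying the resolvent identity
\[
J_{\mu A}(y) = J_{\lambda A}\left(\frac{\lambda}{\mu} y + \left(1 - \frac{\lambda}{\mu}\right) J_{\mu A}(y)\right) \qquad (\lambda, \mu > 0, \ y \in \Hh)
\]
with $\mu = \gamma(t)$, $\lambda = \gamma(s)$ and $y = w(t)$, the nonexpansiveness of $J_{\gamma(s) A}$ yields
\[
\norm{z(t) - z(s)} \leq \norm{w(t) - w(s) + (\gamma(s) - \gamma(t))\, p(t)},
\]
where $p(t) \defeq \frac{x(t) - z(t)}{\gamma(t)} - Bx(t) \in A z(t)$ by Lemma \ref{lem:inclusions}\ref{item:lem:inclusions:A}. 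Local absolute continuity of $w$ follows from that of $x$, $\gamma$ and $Bx$ (the last via Remark \ref{rem-abs-cont}(b)), while $\norm{p(\cdot)}$ is bounded on any compact subinterval because $\gamma$ is there bounded away from $0$ and $\lambda \mapsto \norm{A_\lambda y}$ is nonincreasing. Hence $z$ is locally absolutely continuous.

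At an almost-every $t$ at which $x$, $\gamma$ and $Bx$ are simultaneously differentiable, dividing the displayed estimate by $|t - s|$ and passing to the limit $s \to t$ gives
\[
\norm{\dot z(t)} \leq \norm{\dot w(t) - \dot \gamma(t)\, p(t)} = \norm{\dot x(t) - \tfrac{\dot \gamma(t)}{\gamma(t)}(x(t) - z(t)) - \gamma(t)\, \tfrac{d (B\circ x)}{d t}(t)}.
\]
Substituting $\dot x = -(x - z) + \gamma(Bx - Bz)$ from \eqref{eq:CTseng} and applying the triangle inequality splits the right-hand side as
\[
\norm{\dot z(t)} \leq \norm{\bigl(1 + \tfrac{\dot \gamma}{\gamma}\bigr)(x - z) - \gamma (Bx - Bz)} + \gamma\, \norm{\tfrac{d (B \circ x)}{d t}}.
\]

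For the first summand I would expand the norm squared: the cross term $-2\bigl(1 + \tfrac{\dot\gamma}{\gamma}\bigr) \gamma \inpr{x - z}{Bx - Bz}$ is nonpositive by monotonicity of $B$, while the $\tfrac{1}{\beta}$-Lipschitz continuity of $B$ bounds the remaining squared term by $\tfrac{\gamma^2}{\beta^2}\norm{x - z}^2$; together these produce the factor $\sqrt{(1 + \dot\gamma/\gamma)^2 + \gamma^2/\beta^2}$. For the second summand, Remark \ref{rem-abs-cont}(b) gives $\norm{\tfrac{d(B\circ x)}{d t}} \leq \tfrac{1}{\beta} \norm{\dot x}$, and a direct expansion of $\norm{\dot x}^2$ from \eqref{eq:CTseng}, using again monotonicity and Lipschitz continuity of $B$, delivers $\norm{\dot x}^2 \leq \bigl(1 + \tfrac{\gamma^2}{\beta^2}\bigr) \norm{x - z}^2$; this gives the factor $\tfrac{\gamma}{\beta}\sqrt{1 + \gamma^2/\beta^2}$. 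Adding the two bounds yields precisely the stated inequality.

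The main obstacle will be the clean passage to the limit $s \to t$: the selection $p$ depends on $z$, whose differentiability is not a priori known, and $p$ is not obviously differentiable. The key observation that resolves this is that $p(t)$ enters the bound only multiplied by $(\gamma(s) - \gamma(t))$, which is of order $O(|t - s|)$ at a differentiability point of $\gamma$, so only boundedness of $\norm{p}$ on compact subintervals is required---exactly what the monotonicity of $\lambda \mapsto \norm{A_\lambda w(\cdot)}$ and the lower-boundedness of $\gamma$ on compacts supply.
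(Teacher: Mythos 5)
Your proof is correct and follows essentially the same route as the paper's: establish local absolute continuity of $z$ from a resolvent perturbation bound together with the monotonicity of $\lambda \mapsto \norm{A_\lambda y}$ and the lower bound on $\gamma$ over compacts, pass to the limit in the difference quotient to obtain $\norm{\dot z\p{t}} \leq \norm{\dot x\p{t} - \tfrac{\dot\gamma\p{t}}{\gamma\p{t}}\p{x\p{t}-z\p{t}} - \gamma\p{t}\tfrac{\dd{}}{\dd t}Bx\p{t}}$, and then split and expand the two summands using the monotonicity and Lipschitz continuity of $B$ together with \eqref{xprime}. The only variation is that you derive the difference-quotient bound from the resolvent identity and the nonexpansiveness of $J_{\gamma\p{s}A}$, whereas the paper applies the monotonicity of $A$ to the inclusions of Lemma \ref{lem:inclusions}\ref{item:lem:inclusions:A} at times $s$ and $t$ followed by Cauchy--Schwarz; both yield the identical intermediate inequality \eqref{eq:deriv_estimation}, and your observation that $p\p{t}$ enters only through the factor $\gamma\p{s}-\gamma\p{t}$ (so that no differentiability of the selection is needed) is exactly the point that makes the paper's limit passage work as well.
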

\begin{proof}
Let $b >0$ be fixed. Since $x, Bx$ and $\gamma$ are absolutely continuous on $[0,b]$, the mapping $t \mapsto y(t):=x(t) -  \gamma(t)Bx(t)$ is absolutely continuous on $[0,b]$. 
We show that $t \mapsto J_{\gamma(t)A}(y(t))$ is absolutely continuous on $[0,b]$, as well.

For every $s, t \in [0, b]$, by using \eqref{ineqres} and the nonexpansiveness of the resolvent, we get
\begin{align*}
\norm{z(t) - z(s)} = &  \norm{J_{\gamma(t)A}(y(t)) - J_{\gamma(s)A}(y(s))}\\
  = &  \norm{J_{\gamma(t)A}(y(t)) - J_{\gamma(t)A}(y(s))} + \norm{J_{\gamma(t)A}(y(s)) - J_{\gamma(s)A}(y(s))}\\
\leq &  \norm{y(t) - y(s)} + |\gamma(t) - \gamma(s)|\norm{A_{\gamma(t)}(y(t))}.
\end{align*}
Since $\gamma$ is continuous on $[0,b]$, there exist $\gamma_{\min}, \gamma_{\max} \in (0,\beta)$ such that $\gamma_{\min} \leq \gamma(\cdot) \leq \gamma_{\max}$ on $[0,b]$. 
Using that $\gamma \mapsto \|A_{\gamma}(y(t))\|$ is nonincreasing  and the Lipschitz continuity of the Yosida approximation, it yields for every $s, t \in [0, b]$
\begin{align*}
\norm{z(t) - z(s)} \leq &  \norm{y(t) - y(s)} + |\gamma(t) - \gamma(s)|\norm{A_{\gamma(t)}(y(t))}\\
\leq &  \norm{y(t) - y(s)} + |\gamma(t) - \gamma(s)|\norm{A_{\gamma_{\min}}(y(t))}\\
\leq &  \norm{y(t) - y(s)} + |\gamma(t) - \gamma(s)|\left (\norm{A_{\gamma_{\min}}(0)} + \frac{1}{\gamma_{\min}}\norm{y(t)} \right).
\end{align*}
From here the absolute continuity of $z$ on $[0, b]$ follows, by taking into consideration also that $y$ is bounded.

Applying Lemma \ref{lem:inclusions} \ref{item:lem:inclusions:A} for $s, t \in [0, b], s \neq t, $ we obtain by the monotonicity of $B$
  \begin{align*}
    0 &\leq \inpr{z\p{s} - z\p{t}}{\frac{x\p{s} - z\p{s}}{\gamma\p{s}} - Bx\p{s} - \frac{x\p{t} - z\p{t}}{\gamma\p{t}} + Bx\p{t}},
  \end{align*}
  which is equivalent to
  \begin{align*}
& \norm{\frac{z\p{s} - z\p{t}}{s - t}}^2 \leq\\ 
& \inpr{\frac{z\p{s} - z\p{t}}{s - t}}{\frac{x\p{s} - x\p{t}}{s - t} + \frac{\gamma\p{t} - \gamma\p{s}}{s - t} \cdot \frac{x\p{t} - z\p{t}}{\gamma\p{t}} + \gamma\p{s} \frac{Bx\p{t} - Bx\p{s}}{s - t}},
  \end{align*}
  so, by the Cauchy--Schwarz inequality,
  \[
    \norm{\frac{z\p{s} - z\p{t}}{s - t}} \leq \norm{\frac{x\p{s} - x\p{t}}{s - t} + \frac{\gamma\p{t} - \gamma\p{s}}{s - t} \cdot \frac{x\p{t} - z\p{t}}{\gamma\p{t}} + \gamma\p{s} \cdot \frac{Bx\p{t} - Bx\p{s}}{s - t}}.
  \]
By taking the limit $s \to t$, it follows that for almost every $t \in [0,+\infty)$
  \begin{align}
    \norm{\dot z\p{t}} &\leq \norm{\dot x\p{t} - \frac{\dot \gamma\p{t}}{\gamma\p{t}} \p{x\p{t} - z\p{t}} - \gamma\p{t} \frac{\dd{}}{\dd t} Bx\p{t}} \nonumber \\
    &= \norm{\p{1 + \frac{\dot \gamma\p{t}}{\gamma\p{t}}} \p{z\p{t} - x\p{t}} + \gamma\p{t} \p{Bx\p{t} - Bz\p{t}} - \gamma\p{t} \frac{\dd{}}{\dd t} Bx\p{t}}. \label{eq:deriv_estimation}
  \end{align}
According to Remark \ref{rem-abs-cont}(b) we have $\norm{\frac{\dd{}}{\dd t} Bx\p{t}} \leq \frac{1}{\beta} \norm{\dot x\p{t}}$ for almost every $t \in [0,+\infty)$.
Furthermore, by the monotonicity and the Lipschitz continuity of $B$, we have for almost every $t \in [0,+\infty)$
  \begin{align}\label{xprime}
    \norm{\dot x\p{t}}^2 &= \norm{x\p{t} - z\p{t}}^2 + \p{\gamma\p{t}}^2 \norm{Bx\p{t} - Bz\p{t}}^2 + 2\gamma\p{t} \inpr{x\p{t} - z\p{t}}{Bz\p{t} - Bx\p{t}} \nonumber \\
    &\leq \p{1 + \frac{\p{\gamma\p{t}}^2}{\beta^2}} \norm{x\p{t} - z\p{t}}^2
  \end{align}
  as well as
  \begin{align*}
    &\mathrel{\phantom{=}} \norm{\p{1 + \frac{\dot\gamma\p{t}}{\gamma\p{t}}}\p{z\p{t} - x\p{t}} + \gamma\p{t}\p{Bx\p{t} - Bz\p{t}}}^2 \\
    &\leq \p{\p{1 + \frac{\dot \gamma\p{t}}{\gamma\p{t}}}^2 + \frac{\p{\gamma\p{t}}^2}{\beta^2}} \norm{x\p{t} - z\p{t}}^2
  \end{align*}
  so, getting back to \eqref{eq:deriv_estimation}, we obtain
  \[
    \norm{\dot z\p{t}} \leq \p{\sqrt{\p{1 + \frac{\dot \gamma\p{t}}{\gamma\p{t}}}^2 + \frac{\p{\gamma\p{t}}^2}{\beta^2}} + \frac{\gamma\p{t}}{\beta} \sqrt{1 + \frac{\p{\gamma\p{t}}^2}{\beta^2}}} \norm{x\p{t} - z\p{t}}. \qedhere
  \]
\end{proof}

When $A+B$ is strongly monotone, we have the following strengthened version of the inequality in Lemma \ref{lemma5}.

\begin{lemma}\label{lem:strongmon}
Let $A + B$ be $\rho$-strongly monotone for  $\rho > 0$, $x$ and $z$ be given by \eqref{eq:CTseng} and $\bar x \in \zer\p{A + B}$. Then for almost every $t\in \coint{0}{+\infty}$  we have
  \begin{multline*}
    0 \leq \norm{x\p{t} - \bar x}^2 - \norm{x\p{t} - z\p{t}}^2 - \p{1 + 2\rho\gamma\p{t}} \norm{z\p{t} - \bar x}^2 \\
    \mathop{+} 2\gamma\p{t} \inpr{Bz\p{t} - Bx\p{t}}{z\p{t} - \bar x}.
  \end{multline*}
\end{lemma}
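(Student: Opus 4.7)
The plan is to mirror the proof of Lemma \ref{lemma5}, but apply strong monotonicity of the full operator $A+B$ directly at $z(t)$, rather than only monotonicity of $A$ after splitting off the $B$ term. The key observation is that Lemma \ref{lem:inclusions}\ref{item:lem:inclusions:A+B} already gives us an element of $(A+B)z(t)$ for free, which pairs naturally with $0 \in (A+B)\bar x$.

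Concretely, for almost every $t\in\coint{0}{+\infty}$, I would start from
\[
  \frac{x\p{t} - z\p{t}}{\gamma\p{t}} + Bz\p{t} - Bx\p{t} \in \p{A + B} z\p{t}
\]
from Lemma \ref{lem:inclusions}\ref{item:lem:inclusions:A+B}, and combine it with $0 \in \p{A+B}\bar x$. The $\rho$-strong monotonicity of $A+B$ applied to the pair $\p{z\p{t}, \bar x}$ then yields
\[
  \inpr{z\p{t} - \bar x}{\frac{x\p{t} - z\p{t}}{\gamma\p{t}} + Bz\p{t} - Bx\p{t}} \geq \rho \norm{z\p{t} - \bar x}^2.
\]
Multiplying through by $2\gamma\p{t} > 0$ and invoking the standard identity $2\inpr{z\p{t}-\bar x}{x\p{t}-z\p{t}} = \norm{x\p{t}-\bar x}^2 - \norm{x\p{t}-z\p{t}}^2 - \norm{z\p{t}-\bar x}^2$ on the first term produces
\[
  \norm{x\p{t}-\bar x}^2 - \norm{x\p{t}-z\p{t}}^2 - \norm{z\p{t}-\bar x}^2 + 2\gamma\p{t}\inpr{Bz\p{t}-Bx\p{t}}{z\p{t}-\bar x} \geq 2\rho\gamma\p{t}\norm{z\p{t}-\bar x}^2,
\]
which is precisely the claimed inequality after moving the right-hand side over.

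There is no real obstacle here: the work has been pushed into Lemma \ref{lem:inclusions}\ref{item:lem:inclusions:A+B} (which already bundled the $Bz - Bx$ correction into the inclusion for $A+B$), and the only new ingredient compared to Lemma \ref{lemma5} is the extra $\rho\norm{z\p{t}-\bar x}^2$ term coming from strong monotonicity. Note also that, unlike in Lemma \ref{lemma5}, we do not need to introduce $B\bar x$ at all, since the strong monotonicity inequality is applied to the operator $A+B$ as a whole.
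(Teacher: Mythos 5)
Your proposal is correct and is essentially identical to the paper's own proof: both apply the $\rho$-strong monotonicity of $A+B$ to the pair $\p{z\p{t},\bar x}$ using the inclusion from Lemma \ref{lem:inclusions}\ref{item:lem:inclusions:A+B} together with $0\in\p{A+B}\bar x$, then expand $2\inpr{z\p{t}-\bar x}{x\p{t}-z\p{t}}$ via the standard three-term identity. The paper merely compresses the last step into ``rearranging the terms,'' which you have written out explicitly.
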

\begin{proof}
As $0 \in \gamma(t)\p{A + B} \bar x$, and $\p{A + B}$ is $\rho$-strongly monotone, by taking Lemma \ref{lem:inclusions} \ref{item:lem:inclusions:A+B} into consideration, we have for almost every $t\in \coint{0}{+\infty}$
  \[
    2\rho\gamma\p{t} \norm{z\p{t} - \bar x}^2 \leq 2\inpr{x\p{t} - z\p{t} + \gamma\p{t} Bz\p{t} - \gamma\p{t} Bx\p{t}}{z\p{t} - \bar x}
  \]
  and the assertion follows by rearranging the terms.
\end{proof}

\subsection{Asymptotic properties of the trajectories}\label{subsec42}

The following result, for the proof of which we refer to \cite[Lemma 5.2]{AbbasAttouchSvaiter:2014}, is the continuous counterpart of a classical result which states the convergence of 
quasi-Fej\'er monotone sequences. 

\begin{lemma}\label{fejer-cont2}  If $1 \leq p < \infty$, $1 \leq r \leq \infty$, $F:[0,+\infty)\rightarrow[0,+\infty)$ is 
locally absolutely continuous, $F\in L^p([0,+\infty))$, $G:[0,+\infty)\rightarrow\RR$, $G\in  L^r([0,+\infty))$ and 
for almost every $t \in [0,+\infty)$ $$\frac{\dd {}}{\dd t}F(t)\leq G(t),$$ then $\lim_{t\rightarrow +\infty} F(t)=0$. 
\end{lemma}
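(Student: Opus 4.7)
The plan is to argue by contradiction: assuming that $F(t)$ does not converge to $0$, I will exploit the one-sided estimate $\dot F\leq G$ to propagate a positive lower bound on $F$ \emph{backward} in time, producing a family of disjoint intervals of uniform length on which $F$ is bounded below by a positive constant. This contradicts $F\in L^p$.

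The first step is a uniform continuity-type estimate for the integral of $G$. Since $F$ is locally absolutely continuous and $\dot F(\tau)\leq G(\tau)$ almost everywhere, for all $0\leq s\leq t$ one has
\[
F(t)-F(s)=\int_s^t \dot F(\tau)\,d\tau\leq \int_s^t G(\tau)\,d\tau.
\]
I then claim that for every $\varepsilon>0$ there exists $\delta>0$ such that $\int_s^t G(\tau)\,d\tau\leq\varepsilon$ whenever $0\leq s\leq t\leq s+\delta$. If $r=\infty$ this is immediate from $\int_s^t G\leq\|G\|_\infty(t-s)$. If $1<r<\infty$, H\"older's inequality gives $\int_s^t G\leq\|G\|_r(t-s)^{1-1/r}$. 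If $r=1$, absolute continuity of the Lebesgue integral of the $L^1$-function $G$ supplies the required $\delta$. In all three cases $\delta$ can be chosen uniformly in $s$.

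For the second step, suppose $F(t)\not\to 0$; then $\limsup_{t\to+\infty}F(t)>0$, so there exist $L>0$ and a sequence $t_n\to+\infty$ with $F(t_n)\geq L$. Applying the claim with $\varepsilon=L/2$, I obtain $\delta>0$ such that for all sufficiently large $n$ and every $s\in[t_n-\delta,t_n]$,
\[
F(s)\geq F(t_n)-\int_s^{t_n}G(\tau)\,d\tau\geq L-L/2=L/2.
\]
After passing to a subsequence so that $t_{n+1}\geq t_n+\delta$, the intervals $[t_n-\delta,t_n]$ are pairwise disjoint, and
\[
\int_0^{+\infty}F(t)^p\,dt\geq\sum_{n\geq 1}\int_{t_n-\delta}^{t_n}F(t)^p\,dt\geq\sum_{n\geq 1}\delta\p{L/2}^p=+\infty,
\]
contradicting $F\in L^p$. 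Hence $F(t)\to 0$.

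The main subtlety, and the reason the three cases of $r$ must be handled separately in the first step, is the case $r=1$: there $G$ need not be small pointwise at large times, and one has to rely on absolute continuity of the Lebesgue integral rather than on a H\"older-type estimate. Conceptually, the key feature exploited is the \emph{one-sided} nature of $\dot F\leq G$, which controls the growth of $F$ on $[s,t]$ and hence its values on an interval whose right endpoint is a point where $F$ is large; this is precisely what permits the backward propagation of the positive lower bound.
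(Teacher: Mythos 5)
Your proof is correct: the uniform smallness of $\int_s^t G$ over intervals of length $\delta$ (via the $L^\infty$ bound, H\"older, or absolute continuity of the integral, according to $r$) combined with the backward propagation of the lower bound $F\geq L/2$ and the contradiction with $F\in L^p$ is a complete argument, and the case distinction at $r=1$ is handled properly. The paper itself does not prove this lemma but defers to \cite[Lemma 5.2]{AbbasAttouchSvaiter:2014}, whose proof follows essentially this same contradiction scheme, so your argument matches the intended one.
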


The next result which we recall here is the continuous version of the Opial Lemma (see, for example, \cite[Lemma 5.3]{AbbasAttouchSvaiter:2014}, \cite[Lemma 1.10]{AbbasAttouch:2014}). 

\begin{lemma}\label{opial} Let $S \subseteq {\cal H}$ be a nonempty set and $x:[0,+\infty)\rightarrow{\cal H}$ a given map. Assume that 

(i) for every $\bar x \in S$, $\lim_{t\rightarrow+\infty}\|x(t)-\bar x\|$ exists; 

(ii) every weak sequential cluster point of the map $x$ belongs to $S$. 

\noindent Then there exists $x_{\infty}\in S$ such that  $x(t)$ converges weakly to $x_{\infty}$ as $t \rightarrow +\infty$. 
\end{lemma}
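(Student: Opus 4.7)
The plan is to follow the classical continuous Opial argument: boundedness of the trajectory plus uniqueness of the weak sequential cluster point gives weak convergence. First, I would fix any $\bar x \in S$ and invoke (i) to conclude that $t \mapsto \|x(t) - \bar x\|$ is bounded, so the trajectory $x\p{[0,+\infty)}$ itself is bounded. Since $\Hh$ is reflexive, any sequence along the trajectory admits a weakly convergent subsequence, so at least one weak sequential cluster point exists; by (ii), any such cluster point lies in $S$.

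The heart of the argument is to show that there is \emph{only one} such cluster point. Suppose $x_1, x_2 \in S$ are two weak sequential cluster points, witnessed by sequences $t_n \to +\infty$ with $x\p{t_n} \weakto x_1$ and $s_n \to +\infty$ with $x\p{s_n} \weakto x_2$. By (i) the limits $\ell_i \defeq \lim_{t\to +\infty} \|x\p{t} - x_i\|^2$ exist for $i = 1,2$, and the polarisation identity gives
\[
  \|x\p{t} - x_1\|^2 - \|x\p{t} - x_2\|^2 = 2\inpr{x\p{t}}{x_2 - x_1} + \|x_1\|^2 - \|x_2\|^2,
\]
so $\lim_{t\to+\infty} \inpr{x\p{t}}{x_2 - x_1}$ exists. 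Evaluating this limit along $\p{t_n}$ yields $\inpr{x_1}{x_2 - x_1}$, and along $\p{s_n}$ yields $\inpr{x_2}{x_2 - x_1}$. Equating the two gives $\|x_1 - x_2\|^2 = 0$, hence $x_1 = x_2$. Call this common value $x_\infty \in S$.

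Finally, I would upgrade uniqueness of the cluster point to full weak convergence. If $x\p{t}$ did not converge weakly to $x_\infty$, there would exist some $u \in \Hh$, some $\varepsilon > 0$ and a sequence $t_n \to +\infty$ with $\abs{\inpr{x\p{t_n} - x_\infty}{u}} \geq \varepsilon$. Since $\p{x\p{t_n}}$ is bounded, it has a weakly convergent subsequence whose limit is, by (ii), an element of $S$ and hence equal to $x_\infty$ by the uniqueness just established, contradicting the lower bound $\varepsilon$. Therefore $x\p{t} \weakto x_\infty$ as $t \to +\infty$.

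The only delicate point is the cluster-point-uniqueness step, which hinges on having the limit of $\|x\p{t} - \bar x\|$ for \emph{every} $\bar x \in S$ (not merely for one) so that the cross-term $\inpr{x\p{t}}{x_2 - x_1}$ stabilises; the reduction to weak convergence from there is routine because bounded sets in a Hilbert space are weakly sequentially compact.
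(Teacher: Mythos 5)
Your proof is correct and is the classical Opial argument (boundedness of the tail from (i), uniqueness of the weak sequential cluster point via the polarisation identity, then weak sequential compactness); the paper itself gives no proof of this lemma but refers to the literature, where exactly this argument appears. The only cosmetic imprecision is that (i) guarantees boundedness of $x(t)$ only for large $t$ rather than on all of $[0,+\infty)$, which is all that is needed for cluster points as $t\to+\infty$.
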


The following proposition will play an essential role when establishing the asymptotic properties of the trajectories generated by \eqref{eq:CTseng}.

\begin{lemma}\label{prop:Fejermonotonicity}
  Let $\bar x \in \zer\p{A + B}$. Then $t \mapsto \norm{x\p{t} - \bar x}$ is monotonically decreasing and $\int_0^{+\infty} \p{1 - \frac{\gamma\p{t}}{\beta}} \norm{x\p{t} - z\p{t}}^2 \dd t< +\infty$.
\end{lemma}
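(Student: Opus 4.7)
My plan is to differentiate $t \mapsto \tfrac12\norm{x\p{t} - \bar x}^2$, which is locally absolutely continuous since $x$ is, and to show that its derivative is bounded above by $-\p{1-\gamma\p{t}/\beta}\norm{x\p{t}-z\p{t}}^2$ almost everywhere. Integrating then yields both claims at once: the derivative is nonpositive (so $\norm{x\p{\cdot}-\bar x}$ is monotonically decreasing), and the integral $\int_0^{+\infty}\p{1-\gamma\p{t}/\beta}\norm{x\p{t}-z\p{t}}^2\dd t$ is finite, bounded by $\tfrac12\norm{x_0-\bar x}^2$.

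For almost every $t\in[0,+\infty)$, using the ODE in \eqref{eq:CTseng} to substitute $\dot x\p{t} = z\p{t}-x\p{t}+\gamma\p{t}\p{Bx\p{t}-Bz\p{t}}$, I would compute
\[
 \tfrac12 \tfrac{\dd{}}{\dd t}\norm{x\p{t}-\bar x}^2 = \inpr{z\p{t}-x\p{t}}{x\p{t}-\bar x} + \gamma\p{t}\inpr{Bx\p{t}-Bz\p{t}}{x\p{t}-\bar x}.
\]
Then I would insert the identity $x\p{t}-\bar x = \p{x\p{t}-z\p{t}} + \p{z\p{t}-\bar x}$ on the right-hand side and regroup the four resulting inner products so that the terms paired with $z\p{t}-\bar x$ combine into
\[
 -\inpr{\tfrac{x\p{t}-z\p{t}}{\gamma\p{t}} + Bz\p{t} - Bx\p{t}}{z\p{t}-\bar x}\cdot \gamma\p{t}.
\]
This is the key step: by Lemma \ref{lem:inclusions}\ref{item:lem:inclusions:A+B} the bracketed vector lies in $\p{A+B}z\p{t}$, and since $0 \in \p{A+B}\bar x$, the monotonicity of $A+B$ forces this contribution to be $\leq 0$.

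After discarding that nonpositive term, what remains is $-\norm{x\p{t}-z\p{t}}^2 + \gamma\p{t}\inpr{Bx\p{t}-Bz\p{t}}{x\p{t}-z\p{t}}$. I then apply Cauchy--Schwarz together with the $\tfrac{1}{\beta}$-Lipschitz continuity of $B$ to bound the second term by $\tfrac{\gamma\p{t}}{\beta}\norm{x\p{t}-z\p{t}}^2$, arriving at
\[
 \tfrac12\tfrac{\dd{}}{\dd t}\norm{x\p{t}-\bar x}^2 \leq -\p{1-\tfrac{\gamma\p{t}}{\beta}}\norm{x\p{t}-z\p{t}}^2.
\]
Since $\gamma\p{t} \in \opint{0}{\beta}$, the right side is nonpositive, which gives monotone decrease of $t \mapsto \norm{x\p{t}-\bar x}$; integrating from $0$ to an arbitrary $T>0$ and letting $T\to +\infty$ gives the claimed summability. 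I do not expect serious obstacles here: the only subtlety is making sure to justify differentiating the squared norm almost everywhere, which is immediate from local absolute continuity of $x$, and noticing the algebraic regrouping that allows the monotonicity of $A+B$ (rather than just that of $A$, as in Lemma \ref{lemma5}) to be invoked directly.
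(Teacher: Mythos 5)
Your argument is correct and follows essentially the same route as the paper: differentiate the squared distance to $\bar x$, substitute the dynamics, use monotonicity to discard the terms paired with $z\p{t}-\bar x$, and bound the remainder via Cauchy--Schwarz and the $\tfrac{1}{\beta}$-Lipschitz continuity of $B$, arriving at the identical differential inequality $\frac{\dd{}}{\dd t}\norm{x\p{t}-\bar x}^2 \leq 2\p{\tfrac{\gamma\p{t}}{\beta}-1}\norm{x\p{t}-z\p{t}}^2$ and then integrating. The only (cosmetic) difference is that you invoke the monotonicity of $A+B$ in a single stroke through Lemma \ref{lem:inclusions}\ref{item:lem:inclusions:A+B}, whereas the paper routes through Lemma \ref{lemma5} (monotonicity of $A$ alone) and then applies the monotonicity of $B$ in a separate step --- an equivalent, and if anything slightly cleaner, packaging of the same estimate.
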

\begin{proof}
For almost every $t\in \coint{0}{+\infty}$, by using Lemma \ref{lemma5}, the monotonicity and the Lipschitz continuity of $B$, we have
\begingroup
\allowdisplaybreaks
\begin{align*}
 \frac{\dd{}}{\dd t} \norm{x\p{t} - \bar x}^2 = & \ 2\inpr{x\p{t} - \bar x}{\dot x\p{t}} \\
  = & \ 2\inpr{x\p{t} - \bar x}{z\p{t} - x\p{t} + \gamma\p{t} Bx\p{t} - \gamma\p{t} Bz\p{t}} \\
  = & \ \norm{z\p{t} - \bar x}^2 - \norm{x\p{t} - z\p{t}}^2 - \norm{x\p{t} - \bar x}^2\\ 
     & \ + 2\gamma\p{t} \inpr{x\p{t} - \bar x}{Bx\p{t} - Bz\p{t}} \\
\leq & \ - 2\norm{x\p{t} - z\p{t}}^2 + 2\gamma\p{t} \inpr{B\bar x - Bx\p{t}}{z\p{t} - \bar x}\\ 
& \ + 2\gamma\p{t} \inpr{x\p{t} - \bar x}{Bx\p{t} - Bz\p{t}} \\
\leq & \ - 2\norm{x\p{t} - z\p{t}}^2 + 2\gamma\p{t} \inpr{Bz\p{t} - Bx\p{t}}{z\p{t} - \bar x}\\ 
& \ + 2\gamma\p{t} \inpr{x\p{t} - \bar x}{Bx\p{t} - Bz\p{t}} \\
 = & \ -2\norm{x\p{t} - z\p{t}}^2 + 2\gamma\p{t} \inpr{Bz\p{t} - Bx\p{t}}{z\p{t} - x\p{t}} \\
 \leq & \ 2\p{\frac{\gamma\p{t}}{\beta} - 1} \norm{x\p{t} - z\p{t}}^2 \leq 0,
\end{align*}
\endgroup
which shows the decreasing property. Integrating from $0$ to $T$, for $T >0$, yields
\[
  \int_0^T \p{1 - \frac{\gamma\p{t}}{\beta}} \norm{x\p{t} - z\p{t}}^2 \dd t \leq \frac{\norm{x\p{0} - \bar x}^2 - \norm{x\p{T} - \bar x}^2}{2} \leq \frac{\norm{x\p{0} - \bar x}^2}{2},
\]
which is independent of $T$.
\end{proof}

\begin{theorem}\label{th2}
Let $\zer\p{A + B} \neq \emptyset$ and let $\gamma$ be locally absolutely continuous such that, for some $\delta, \varepsilon > 0$, we have $\delta \leq \gamma\p{t} \leq \beta - \varepsilon$ for all $t\in [0, +\infty)$ and 
$\dot \gamma \in L^\infty([0, +\infty))$. Then the trajectories $x\p{t}$ and $z\p{t}$ generated by \eqref{eq:CTseng} converge weakly to an element in $\zer\p{A + B}$ as $t \rightarrow +\infty$.
\end{theorem}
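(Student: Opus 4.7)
The plan is to invoke the continuous Opial lemma (Lemma~\ref{opial}) with $S = \zer(A+B)$. Condition (i) is immediate: for any $\bar x \in \zer(A+B)$, Lemma~\ref{prop:Fejermonotonicity} asserts that $t \mapsto \|x(t)-\bar x\|$ is monotonically decreasing, so being bounded below by zero it admits a limit as $t \to +\infty$.

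The crucial intermediate step for checking condition (ii) is to show that $\|x(t)-z(t)\| \to 0$ as $t\to+\infty$. From the second part of Lemma~\ref{prop:Fejermonotonicity} together with the uniform bound $\gamma(t) \le \beta-\varepsilon$, one obtains $\int_0^{+\infty}\|x(t)-z(t)\|^2\,dt < +\infty$. I would then apply Lemma~\ref{fejer-cont2} to $F(t) \defeq \|x(t)-z(t)\|^2$, which is locally absolutely continuous thanks to Lemma~\ref{lem:zderiv}. Its derivative is dominated by $\dot F(t) \le 2\|x(t)-z(t)\|\,(\|\dot x(t)\| + \|\dot z(t)\|)$; the estimate \eqref{xprime} controls $\|\dot x(t)\|$ by a constant multiple of $\|x(t)-z(t)\|$, and Lemma~\ref{lem:zderiv} together with the hypotheses $\delta \le \gamma(t) \le \beta-\varepsilon$ and $\dot\gamma \in L^\infty([0,+\infty))$ produces an analogous bound $\|\dot z(t)\| \le M\|x(t)-z(t)\|$. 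Hence $\dot F(t) \le C F(t)$ with $CF \in L^1([0,+\infty))$, so Lemma~\ref{fejer-cont2} (with $p = r = 1$) yields $F(t) \to 0$.

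Next I would verify the second Opial condition. Let $x_\infty$ be a weak sequential cluster point of $x$, so $x(t_n) \weakto x_\infty$ along some $t_n \to +\infty$. Since $\|x(t_n) - z(t_n)\| \to 0$, also $z(t_n) \weakto x_\infty$. By Lemma~\ref{lem:inclusions}\,(b), $v_n \defeq -\dot x(t_n)/\gamma(t_n) \in (A+B)z(t_n)$, and the second equation of \eqref{eq:CTseng} together with the Lipschitz continuity of $B$ gives $\|v_n\| \le \delta^{-1}\|\dot x(t_n)\| \to 0$. Now $A + B$ is maximally monotone because $A$ is maximally monotone and $B$ is monotone, single-valued, everywhere defined and continuous; consequently its graph is sequentially closed in the weak-strong topology of $\Hh \times \Hh$, so $(x_\infty, 0) \in \Graph(A+B)$, i.e.\ $x_\infty \in \zer(A+B)$.

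Opial's lemma then delivers a single $x_\infty \in \zer(A+B)$ with $x(t) \weakto x_\infty$, and the same limit is inherited by $z(t)$ via $\|x(t)-z(t)\| \to 0$. The main obstacle is exactly the step establishing $\|x(t)-z(t)\| \to 0$: it is here that the hypotheses $\delta \le \gamma(t) \le \beta-\varepsilon$ and $\dot\gamma \in L^\infty$ are genuinely used, first to turn the $L^2$-integrability coming from Lemma~\ref{prop:Fejermonotonicity} into pointwise decay via Lemma~\ref{fejer-cont2}, and second to keep the coefficient in the bound of Lemma~\ref{lem:zderiv} uniformly finite so that $\dot F$ is integrable.
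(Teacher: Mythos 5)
Your proposal is correct and follows essentially the same route as the paper: the quasi-Fej\'er lemma (Lemma~\ref{fejer-cont2}) applied to $F(t)=\norm{x(t)-z(t)}^2$ using the bounds from Lemma~\ref{prop:Fejermonotonicity}, \eqref{xprime} and Lemma~\ref{lem:zderiv}, followed by the weak--strong closedness of $\Graph(A+B)$ and the continuous Opial lemma. No gaps; this matches the paper's argument step for step.
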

\begin{proof}
According to Lemma \ref{prop:Fejermonotonicity} we have that $t \rightarrow \norm{x\p{t} - z\p{t}}^2$, mapping from $[0,+\infty)$ to $[0,+\infty)$, belongs to $L^1{\coint{0}{+\infty}}$.
Furthermore, by the Cauchy--Schwarz inequality, the triangle inequality, \eqref{xprime} and Lemma \ref{lem:zderiv} we have that for almost every $t \in [0,+\infty)$
  \begin{align*}
    \frac{\dd{}}{\dd t} \norm{x\p{t} - z\p{t}}^2 &= 2\inpr{x\p{t} - z\p{t}}{\dot x\p{t} - \dot z\p{t}} \\
    &\leq 2\p{\norm{\dot x\p{t}} + \norm{\dot z\p{t}}} \norm{x\p{t} - z\p{t}} \\
    &\leq \!\p{\!\!\sqrt{\p{1 + \frac{\dot \gamma\p{t}}{\gamma\p{t}}}^2 + \frac{\p{\gamma\p{t}}^2}{\beta^2}} \! + \!\p{1 + \frac{\gamma\p{t}}{\beta}} \sqrt{1 + \frac{\p{\gamma\p{t}}^2}{\beta^2}}}\!\norm{x\p{t} - z\p{t}}^2 \\
    &\leq \p{\sqrt{\p{1 + \frac{\norm{\dot \gamma}_{L^\infty([0, +\infty))}}{\delta}}^2 + 1} + 2\sqrt{2}} \norm{x\p{t} - z\p{t}}^2
  \end{align*}
By Lemma \ref{fejer-cont2} we have $\lim_{t\to +\infty} \norm{x\p{t} - z\p{t}}^2 = 0$, which implies, via \eqref{xprime}, that  $\dot x\p{t} \to 0$ as $t\to +\infty$. 
Let $w \in \Hh$ be a weak sequential cluster point of $x\p{t}$ as $t\to +\infty$ and $\p{t_n}_{n \geq 0}$ be a sequence in $\coint{0}{+\infty}$ with $t_n \to +\infty$ and $x\p{t_n} \weakto w$ as $n \to +\infty$. 
Since $\lim_{t\to +\infty} \p{x\p{t} - z\p{t}} = 0$, we also have $z\p{t_n} \weakto w$ as $n\to \infty$. Furthermore, $-\frac{\dot x\p{t_n}}{\gamma\p{t_n}} \to 0$ as $n \to +\infty$, since $\gamma\p{t_n} \geq \delta$ 
for all $n\geq 0$. 

By Lemma \ref{lem:inclusions} \ref{item:lem:inclusions:A+B} and the fact that the graph of the maximally monotone operator $A+B$ is sequentially weak-strong closed (see \cite[Corollary 24.4, Proposition 20.33]{BauschkeCombettes:2011}), 
we have $\p{w, 0} \in \Graph\p{A + B}$, thus $w \in \zer\p{A + B}$. By Lemma \ref{prop:Fejermonotonicity}, $\norm{x\p{t} - \bar x}$ converges as $t\to +\infty$. 
According to the Opial Lemma, $x\p{t}$ (and, consequently, $z\p{t}$) converges weakly to an element of $\zer\p{A + B}$ as $t\to +\infty$.
\end{proof}

For the important special case of strongly monotone inclusions, we are able to show strong convergence of the trajectories to solutions without any continuity assumptions on the function $\gamma$.
\begin{theorem}\label{th3}
  Let $A + B$ be $\rho$-strongly monotone for $\rho > 0$. and let $\bar x \in \zer\p{A + B}$. Then we have for every $t \in [0,+\infty)$ the estimate
  \[
    \norm{x\p{t} - \bar x}^2 \leq \frac{\norm{x\p{0} - \bar x}^2}{\exp\p{\int_0^t \frac{2\rho\gamma\p{s}\p{\beta - \gamma\p{s}}}{\beta\rho\gamma\p{s} + \beta - \gamma\p{s}} \dd s}}.
  \]
  In particular, if
  \[
    \int_0^{+\infty} \frac{2\rho\gamma\p{s} \p{\beta - \gamma\p{s}}}{\beta\rho\gamma\p{s} + \beta - \gamma\p{s}} \dd s = +\infty,
  \]
 then $x\p{t}$ converges in norm to the unique element of $\zer\p{A + B}$ as $t \rightarrow +\infty$.
\end{theorem}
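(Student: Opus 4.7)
The plan is to obtain a linear differential inequality $\frac{d}{dt}\norm{x(t)-\bar x}^2 \leq -c(\gamma(t))\norm{x(t)-\bar x}^2$ with $c(\gamma) = \frac{2\rho\gamma(\beta-\gamma)}{\beta\rho\gamma+\beta-\gamma}$, and then apply a standard Gronwall-type argument to integrate it.

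The derivation of the differential inequality mirrors the start of Lemma \ref{prop:Fejermonotonicity}. Using $\dot x = z - x + \gamma(Bx - Bz)$ and the splitting $x - \bar x = (x - z) + (z - \bar x)$, I would expand $\frac{d}{dt}\norm{x-\bar x}^2 = 2\inpr{x-\bar x}{\dot x}$ via the polarization identity to obtain a combination of $\norm{z-\bar x}^2 - \norm{x-\bar x}^2$, the term $-\norm{x-z}^2$, and two cross terms $2\gamma\inpr{x-z}{Bx-Bz}$ and $2\gamma\inpr{z-\bar x}{Bx-Bz}$. At this point, Lemma \ref{lem:strongmon} is applied: it provides the estimate $\norm{z-\bar x}^2 - \norm{x-\bar x}^2 \leq -2\rho\gamma\norm{z-\bar x}^2 - \norm{x-z}^2 + 2\gamma\inpr{Bz-Bx}{z-\bar x}$. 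The key bookkeeping observation is that the term $2\gamma\inpr{Bz-Bx}{z-\bar x}$ coming from the lemma cancels exactly with the cross term $2\gamma\inpr{z-\bar x}{Bx-Bz}$ in the derivative. After this cancellation, only the term $2\gamma\inpr{x-z}{Bx-Bz}$ remains, and it is controlled by the Lipschitz continuity of $B$ via $2\gamma\inpr{x-z}{Bx-Bz} \leq \frac{2\gamma}{\beta}\norm{x-z}^2$. The outcome is the clean two-term bound
\[
\frac{\dd{}}{\dd t}\norm{x(t)-\bar x}^2 \leq -2\p{1 - \frac{\gamma(t)}{\beta}}\norm{x(t)-z(t)}^2 - 2\rho\gamma(t)\norm{z(t)-\bar x}^2.
\]

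The main obstacle, and the step where the specific form of the rate appears, is converting this two-term upper bound into a multiple of $\norm{x(t)-\bar x}^2$. For this I would use the elementary inequality
\[
\frac{pq}{p+q}(a+b)^2 \leq pa^2 + qb^2 \qquad (p, q > 0, a, b \geq 0),
\]
which follows from the discriminant being non-positive. Setting $a = \norm{x-z}$, $b = \norm{z-\bar x}$, $p = 2(1-\gamma/\beta)$ and $q = 2\rho\gamma$, the triangle inequality $\norm{x-\bar x}^2 \leq (a+b)^2$ combined with a short algebraic simplification of $\frac{pq}{p+q}$ yields exactly $c(\gamma) = \frac{2\rho\gamma(\beta-\gamma)}{\beta\rho\gamma + \beta - \gamma}$ (equivalently, $\frac{1}{c(\gamma)} = \frac{1}{2\rho\gamma} + \frac{\beta}{2(\beta-\gamma)}$, revealing the rate as a harmonic-type combination of the strong-monotonicity rate $2\rho\gamma$ and the Tseng contraction factor $2(1-\gamma/\beta)$).

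Finally, the inequality $\frac{d}{dt}\norm{x(t)-\bar x}^2 \leq -c(\gamma(t))\norm{x(t)-\bar x}^2$ holding almost everywhere, together with the local absolute continuity of $t \mapsto \norm{x(t)-\bar x}^2$ (inherited from that of $x$), allows multiplication by the integrating factor $\exp(\int_0^t c(\gamma(s))\,ds)$ and integration to deliver the stated estimate. The divergence of the integral then forces $\norm{x(t)-\bar x} \to 0$, and since $A+B$ is maximally monotone and strongly monotone, $\zer(A+B)$ is a singleton (as noted in Section \ref{sec2}), so $\bar x$ is the unique zero and the strong convergence conclusion follows.
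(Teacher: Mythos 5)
Your proposal is correct and follows essentially the same route as the paper: the same two-term bound $\frac{\dd{}}{\dd t}\norm{x\p{t}-\bar x}^2 \leq -2\p{1-\frac{\gamma\p{t}}{\beta}}\norm{x\p{t}-z\p{t}}^2 - 2\rho\gamma\p{t}\norm{z\p{t}-\bar x}^2$ derived from Lemma \ref{lem:strongmon}, followed by Gr\"onwall. The only (cosmetic) difference is the last algebraic step: the paper applies a weighted Young inequality to $\norm{z\p{t}-\bar x}^2$ with the parameter $\alpha\p{t}=1+\frac{\beta-\gamma\p{t}}{\beta\rho\gamma\p{t}}$ so that the $\norm{x\p{t}-z\p{t}}^2$ terms cancel, whereas you combine the triangle inequality with the harmonic-mean inequality $\frac{pq}{p+q}(a+b)^2\leq pa^2+qb^2$; both yield the identical constant $\frac{2\rho\gamma\p{\beta-\gamma}}{\beta\rho\gamma+\beta-\gamma}$.
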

\begin{proof}
For almost every $t\in \coint{0}{+\infty}$, by using Lemma \ref{lem:strongmon}, the monotonicity and the Lipschitz continuity of $B$, we have
\begin{align*}
    \frac{\dd{}}{\dd t} \norm{x\p{t} - \bar x}^2
    = & 2\inpr{x\p{t} - \bar x}{\dot x\p{t}} \\
    = & 2\inpr{x\p{t} - \bar x}{z\p{t} - x\p{t}} + 2\gamma\p{t} \inpr{x\p{t} - \bar x}{Bx\p{t} - Bz\p{t}} \\
    = & \norm{z\p{t} - \bar x}^2 - \norm{x\p{t} - z\p{t}}^2 - \norm{x\p{t} - \bar x}^2 \\
    & \mathop{+} 2\gamma\p{t} \inpr{x\p{t} - \bar x}{Bx\p{t} - Bz\p{t}} \\
    \leq & - 2\norm{x\p{t} - z\p{t}}^2 + 2\gamma\p{t} \inpr{x\p{t} - z\p{t}}{Bx\p{t} - Bz\p{t}} \\
    & \mathop{-} 2\rho\gamma\p{t} \norm{z\p{t} - \bar x}^2 \\
    \leq & -2\p{1 - \frac{\gamma\p{t}}{\beta}} \norm{x\p{t} - z\p{t}}^2 - 2\rho\gamma\p{t} \norm{z\p{t} - \bar x}^2.
  \end{align*}
For $\alpha: \coint{0}{+\infty} \to \opint{0}{+\infty}$, 
  \[
    \alpha\p{t} = 1 + \frac{\beta - \gamma\p{t}}{\beta\rho \gamma\p{t}} > 1,
  \]
we have 
\[
    \norm{z\p{t} - \bar x}^2 \geq \p{1 - \alpha\p{t}}\norm{x\p{t} - z\p{t}}^2 + \p{1 - \frac{1}{\alpha\p{t}}} \norm{x\p{t} - \bar x}^2,
  \]  
in other words,  
 \begin{align*}
    -2\rho\gamma\p{t} \norm{z\p{t} - \bar x}^2 \leq 2 \p{1 - \frac{\gamma\p{t}}{\beta}} \norm{x\p{t} - z\p{t}}^2 - 
    \frac{2\rho\gamma\p{t}\p{\beta - \gamma\p{t}}}{\beta\rho\gamma\p{t} + \beta - \gamma\p{t}} \norm{x\p{t} - \bar x}^2
 \end{align*}
for every $t \in [0,+\infty)$. Consequently,  
  \[
    \frac{\dd{}}{\dd t} \norm{x\p{t} - \bar x}^2 \leq - \frac{2\rho\gamma\p{t} \p{\beta - \gamma\p{t}}}{\beta \rho \gamma\p{t} + \beta - \gamma\p{t}} \norm{x\p{t} - \bar x}^2
  \]
for amost every $t \in [0,+\infty)$. By Gr\"onwall's inequality, for every $t \in [0,+\infty)$ we have
  \[
    \norm{x\p{t} - \bar x}^2 \leq \frac{\norm{x\p{0} - \bar x}^2}{\exp\p{\int_0^t \frac{2\rho\gamma\p{s}\p{\beta - \gamma\p{s}}}{\beta\rho\gamma\p{s} + \beta - \gamma\p{s}} \dd s}}. \qedhere
  \]
 \end{proof}

\begin{corollary}\label{cor:exponential}
 Let $A + B$ be $\rho$-strongly monotone for $\rho >0$ and $\gamma$ let be such that for $\delta, \varepsilon > 0$ we have $\delta \leq \gamma\p{t} \leq \beta - \varepsilon$ for all $t\in [0, +\infty)$
Then the trajectory $x\p{t}$ converges to the unique element of $\zer\p{A + B}$ with an exponential rate as $t \rightarrow +\infty$.
\end{corollary}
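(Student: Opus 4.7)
The plan is to apply Theorem \ref{th3} directly; the only thing to verify is that, under the uniform bounds $\delta \leq \gamma(t) \leq \beta - \varepsilon$, the integrand in the exponent,
\[
  \psi(s) \defeq \frac{2\rho\gamma(s)\p{\beta - \gamma(s)}}{\beta\rho\gamma(s) + \beta - \gamma(s)},
\]
is bounded below by a strictly positive constant.

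To bound the numerator from below, I use $\gamma(s) \geq \delta$ and $\beta - \gamma(s) \geq \varepsilon$, so $2\rho\gamma(s)\p{\beta - \gamma(s)} \geq 2\rho\delta\varepsilon$. To bound the denominator from above, I use $\gamma(s) \leq \beta - \varepsilon < \beta$, so that $\beta\rho\gamma(s) + \beta - \gamma(s) \leq \beta^2\rho + \beta$. Setting
\[
  c \defeq \frac{2\rho\delta\varepsilon}{\beta^2\rho + \beta} > 0,
\]
I obtain $\psi(s) \geq c$ for all $s \in [0, +\infty)$, and in particular $\int_0^{+\infty} \psi(s)\,\dd s = +\infty$, which gives strong convergence to the unique element $\bar x \in \zer(A+B)$.

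Plugging this lower bound into the estimate provided by Theorem \ref{th3} yields, for every $t \in [0, +\infty)$,
\[
  \norm{x(t) - \bar x}^2 \leq \norm{x(0) - \bar x}^2 \exp\p{-ct},
\]
which is exactly the claimed exponential rate. There is no real obstacle here: the argument is a one-line estimation of a continuous function of $\gamma(s)$ on the compact interval $[\delta, \beta - \varepsilon]$ reduced to an elementary inequality, so the proof is essentially a direct quotation of Theorem \ref{th3}.
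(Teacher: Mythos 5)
Your argument is correct and follows the paper's own proof: both apply Theorem \ref{th3} and bound the integrand below by a positive constant using $\gamma(s)\in[\delta,\beta-\varepsilon]$. The only difference is that you bound the denominator by $\beta^2\rho+\beta$ rather than the paper's sharper $\beta\rho(\beta-\varepsilon)+\beta-\delta$, which yields a slightly smaller (but still strictly positive) exponential rate constant.
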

\begin{proof}
Let $\bar x$ be the unique element of $\zer\p{A + B}$. According to Theorem \ref{th3} it holds for every $t \in [0,+\infty)$
  \begin{align*}
    \norm{x\p{t} - \bar x}^2 &\leq \frac{\norm{x\p{0} - \bar x}^2}{\exp\p{\int_0^t \frac{2\rho\gamma\p{s}\p{\beta - \gamma\p{s}}}{\beta\rho\gamma\p{s} + \beta - \gamma\p{s}} \dd s}} 
    \leq \frac{\norm{x\p{0} - \bar x}^2}{\exp\p{\int_0^t \frac{2\rho\delta\varepsilon}{\beta\rho\p{\beta - \varepsilon} + \beta - \delta} \dd s}} \\
    &= \norm{x\p{0} - \bar x}^2 \ee^{-\frac{2\rho\delta\varepsilon t}{\beta\rho\p{\beta - \varepsilon} + \beta - \delta}}, 
  \end{align*}
which leads to the desired conclusion.
\end{proof}
\begin{remark}
  Corollary \ref{cor:exponential} can be seen as the continuous-time counterpart of \cite[Theorem 3.4]{BotHendrich:2014}.
\end{remark}

\subsection{Ergodic objective rate for convex minimization problems}\label{subsec43}

Consider the convex minimization problem
$$\mbox{minimize} \ f(x) + h(x),$$
where $f : \Hh \to \RRc$ is a proper, convex and lower semicontinuous function and $h: \Hh \to \RR$ a convex and Fr\'echet differentiable one with a $\frac{1}{\beta}$-Lipschitz continuous gradient for $\beta > 0$. Since
$\argmin (f+h) = \zer (\partial f + \nabla h)$, one can approach this set by means of the trajectories of the dynamical system \eqref{eq:CTseng} written for $A=\partial f$ and $B = \nabla h$. We notice that, for $\eta > 0$, 
the resolvent of $\eta \partial f$ is given by $J_{\eta \partial f}=\Prox_{\eta f}$ (see \cite{BauschkeCombettes:2011}),
where $\Prox_{\eta f}:{\cal H}\rightarrow {\cal H}$,
\begin{equation*}
\Prox\nolimits_{\eta f}(x)=\argmin_{y\in {\cal H}}\left \{f(y)+\frac{1}{2\eta}\|y-x\|^2\right\},
\end{equation*}
denotes the proximal point operator of $\eta f$. Thus, the dynamical system \eqref{eq:CTseng} becomes
\begin{equation}\label{eq:CTsengf}\left\{
\begin{array}{rl}
  z\p{t} \!\!\!&= \Prox_{\gamma\p{t} f} \p{x\p{t} - \gamma\p{t} \nabla h\p{x\p{t}}}, \\
  0 \!\!\!&= \dot x\p{t} + x\p{t} - z\p{t} - \gamma\p{t} \nabla h\p{x\p{t}} + \gamma\p{t} \nabla h\p{z\p{t}}\\
  x\p{0} \!\!\!&= x_0.
\end{array}\right.  
\end{equation}
In the following. we are concerned with the asymptotic behavior of the ergodic trajectory
\[
  \zeta\p{t} \defeq \frac{1}{\Gamma\p{t}} \int_0^t \gamma\p{s} z\p{s} \dd s, \qquad \text{where} \qquad \Gamma\p{t} \defeq \int_0^t \gamma\p{s} \dd s,
\]
of $z$ with weight $\gamma$ as $t \rightarrow +\infty$.

\begin{theorem}\label{thm:ergodic_objective_rate}
  Let $f: \Hh \to \RRc$ be proper, convex and lower semicontinuous, $h: \Hh \to \RR$ be convex and dand Fr\'echet differentiable one with a $\frac{1}{\beta}$-Lipschitz continuous gradient for $\beta > 0$, 
$x$ and $z$ be given by \eqref{eq:CTsengf} and let $\gamma:[0,+\infty) \rightarrow (0,\beta)$ be Lebesgue measurable. Then
 \begin{equation}\label{eqphi}
    \p{f + h}\p{\zeta\p{t}} \leq \p{f + h}\p{x} + \frac{\norm{x\p{0} - x}^2}{2\Gamma\p{t}}
\end{equation}
 for every $x \in \Hh$ and every $t > 0$ such that $\zeta\p{t} \in \dom f$.
\end{theorem}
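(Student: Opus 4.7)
The plan is to integrate a pointwise energy estimate that bounds $(f+h)(z(t))$ from above in terms of $(f+h)(x)$ plus a Lyapunov-type decrement, and then pass to the ergodic trajectory via Jensen's inequality.

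First I would use Lemma~\ref{lem:inclusions}\ref{item:lem:inclusions:A+B} in the convex setting ($A=\partial f$, $B=\nabla h$), which reads
\[
  -\tfrac{\dot x(t)}{\gamma(t)} - \nabla h(z(t)) \in \partial f(z(t))
\]
for almost every $t$. The subgradient inequality for $f$ at $z(t)$ combined with the convexity inequality $h(x)\geq h(z(t))+\inpr{\nabla h(z(t))}{x-z(t)}$ then gives, after the $\nabla h(z(t))$ terms cancel,
\[
  \gamma(t)\bigl[(f+h)(z(t))-(f+h)(x)\bigr] \leq \inpr{\dot x(t)}{x-z(t)}
\]
for every $x\in\Hh$.

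Next I would split $x-z(t)=(x-x(t))+(x(t)-z(t))$. The first contribution is $\inpr{\dot x(t)}{x-x(t)} = -\tfrac12\tfrac{\dd{}}{\dd t}\norm{x(t)-x}^2$, producing the desired Lyapunov term. For the second contribution I substitute the governing equation $\dot x(t) = z(t)-x(t)+\gamma(t)(\nabla h(x(t))-\nabla h(z(t)))$ and obtain
\[
  \inpr{\dot x(t)}{x(t)-z(t)} = -\norm{x(t)-z(t)}^2 + \gamma(t)\inpr{\nabla h(x(t))-\nabla h(z(t))}{x(t)-z(t)}.
\]
The Lipschitz continuity of $\nabla h$ and Cauchy--Schwarz bound the cross term by $\tfrac{\gamma(t)}{\beta}\norm{x(t)-z(t)}^2$, and since $\gamma(t)<\beta$ the whole expression is nonpositive. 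Combining yields
\[
  \gamma(t)\bigl[(f+h)(z(t))-(f+h)(x)\bigr] \leq -\tfrac12\tfrac{\dd{}}{\dd t}\norm{x(t)-x}^2.
\]

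Then I would integrate this inequality over $[0,t]$, drop the nonnegative term $\norm{x(t)-x}^2$, and apply Jensen's inequality to $f+h$, which is convex, using that $\zeta(t)$ is the $\gamma$-weighted mean of $z(\cdot)$ on $[0,t]$:
\[
  \Gamma(t)(f+h)(\zeta(t)) \leq \int_0^t \gamma(s)(f+h)(z(s))\,\dd s \leq \Gamma(t)(f+h)(x)+\tfrac12\norm{x(0)-x}^2.
\]
Dividing by $\Gamma(t)$ gives \eqref{eqphi}.

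The main obstacle is handling the residual term $\inpr{\dot x(t)}{x(t)-z(t)}$ that arises from the discrepancy between $x(t)$ and $z(t)$ (a defect absent in a purely forward-backward analysis); this is precisely where the forward-backward-forward structure of the dynamics and the stepsize bound $\gamma(t)<\beta$ are used to make the term nonpositive and absorb it. A minor technical point is that the estimate only holds almost everywhere, but local absolute continuity of $t\mapsto\norm{x(t)-x}^2$ (inherited from that of $x$) makes integration legitimate.
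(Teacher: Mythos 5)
Your proposal is correct and follows essentially the same route as the paper: the same differential inclusion from Lemma~\ref{lem:inclusions}, the same subgradient inequality, the same splitting of $x - z(s)$ into $(x - x(s)) + (x(s) - z(s))$ with the forward-backward-forward dynamics and the bound $\gamma(s) < \beta$ absorbing the residual term, and the same final application of Jensen's inequality to the $\gamma$-weighted ergodic average. The only cosmetic difference is that you apply the subgradient inequality to $f$ and the gradient inequality to $h$ separately rather than using $\partial f + \nabla h \subseteq \partial(f+h)$ directly, which changes nothing of substance.
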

\begin{proof}
Let $x \in \Hh$ and $t > 0$ be such that $\zeta\p{t} \in \dom f$ fixed. We have, by Lemma \ref{lem:inclusions} \ref{item:lem:inclusions:A+B},
  \[
    -\frac{\dot x\p{s}}{\gamma\p{s}} \in \p{\partial f + \nabla h}\p{z\p{s}} = \partial\p{f + h}\p{z\p{s}}
  \]
and further, by the subdifferential inequality, we obtain
  \begin{equation}\label{eq:CTseng_subdiff_ineq}
    \p{f + h}\p{x} \geq \p{f + h}\p{z\p{s}} + \inpr{-\frac{\dot x\p{s}}{\gamma\p{s}}}{x - z\p{s}}
  \end{equation}
 for almost every $s \in \coint{0}{+\infty}$.  
By the Lipschitz continuity of $B$ and the Cauchy--Schwarz inequality it follows that for almost every $s \in [0,+\infty)$
 \begin{align*}
\inpr{\dot x\p{s}}{x - z\p{s}}  = & \inpr{x\p{s} - z\p{s}}{\dot x\p{s}} + \inpr{x - x\p{s}}{\dot x\p{s}} \\
    = & \inpr{x\p{s} - z\p{s}}{z\p{s} - x\p{s} + \gamma\p{s} Bx\p{s} - \gamma\p{s} Bz\p{s}} + \inpr{x - x\p{s}}{\dot x\p{s}}\\
    \leq & -\norm{x\p{s} - z\p{s}}^2 + \frac{\gamma\p{s}}{\beta} \norm{x\p{s} - z\p{s}}^2 + \inpr{x - x\p{s}}{\dot x\p{s}}\\
    \leq & \inpr{x - x\p{s}}{\dot x\p{s}}\\ 
    = & -\frac{1}{2} \frac{\dd{}}{\dd s} \norm{x\p{s} - x}^2.
  \end{align*} 
 
 By Jensen's inequality in integral form, \eqref{eq:CTseng_subdiff_ineq} and the previous estimate we have
  \begin{align*}
    \p{f + h}\p{\zeta\p{t}}
    &= \p{f + h}\p{\frac{1}{\Gamma\p{t}} \int_0^{t} \gamma\p{s} z\p{s} \dd s} \\
    &\leq \frac{1}{\Gamma\p{t}} \int_0^t \gamma\p{s} \p{f + h}\p{z\p{s}} \dd s \\
    &\leq \frac{1}{\Gamma\p{t}} \int_0^t \gamma\p{s} \p{\p{f + h}\p{x} + \frac{1}{\gamma\p{s}} \inpr{\dot x\p{s}}{x - z\p{s}}} \dd s \\
    &\leq \p{f + h}\p{x} + \frac{1}{\Gamma\p{t}} \int_0^t -\frac{1}{2} \frac{\dd{}}{\dd s} \norm{x\p{s} - x}^2 \dd s \\
    &= \p{f + h}\p{x} + \frac{\norm{x\p{0} - x}^2 - \norm{x\p{t} - x}^2}{2\Gamma\p{t}},
  \end{align*}
  from which the assertion follows by neglecting the nonpositive term $\frac{-\norm{x\p{t} - x}^2}{2\Gamma\p{t}}$.
\end{proof}

\begin{remark}\label{jensen}
If $\Hh$ is finite dimensional or $\dom f$ is closed, then one can use Jensen's inequality in integral form in a less restrictive way (see for instance \cite{Perlman:1974}). Under these premises inequality
\eqref{eqphi} in Theorem \ref{thm:ergodic_objective_rate} is fulfilled for every $x \in \Hh$ and every $t > 0$.
\end{remark}

\begin{remark}
  The statement of Theorem \ref{thm:ergodic_objective_rate} holds in a similar form for the discrete version given by the forward-backward-forward iterative algorithm, too. Let $x_0 \in \Hh$ be arbitrary, $(\gamma_n)_{n\geq 0} \subseteq (0,\beta)$, 
  and $\p{x_n}_{n\geq 0}$ and $\p{z_n}_{n\geq 0}$ be the sequences generated by
\begin{equation*}
(\forall n \geq 0) \ \left\{
\begin{aligned}
  z_n &:= \Prox_{\gamma_n f} \p{x_n - \gamma_n \nabla h(x_n)}  \\
  x_{n+1} &:= z_n + \gamma_n \p{\nabla h(x_n) - \nabla h(z_n)}.
\end{aligned}
\right.
\end{equation*}  
Let $x \in \Hh$ and $n \geq 0$ be fixed. For any $k \geq 0$ we have
$$x_k - z_k - \gamma_k \nabla h\p{x_k} \in \gamma_k \partial f\p{z_k},$$ so
  \[
    \frac{x_k - z_k}{\gamma_k} - \nabla h\p{x_k} + \nabla h\p{z_k} \in \partial\p{f + h}\p{z_k}.
  \]
The subgradient inequality yields for any $k \geq 0$
  \begin{align*}
    \p{f + h}\p{z_k} &\leq \p{f + h}\p{x} - \inpr{\frac{x_k - z_k}{\gamma_k} - \nabla h\p{x_k} + \nabla h\p{z_k}}{x - z_k} \\
    &= \p{f + h}\p{x} - \frac{1}{\gamma_k} \inpr{x_k - x_{k+1}}{x - z_k}.
  \end{align*}
On the other hand, for any $k \geq 0$ it holds
\begin{align*}
    \inpr{x_{k+1} - x_k}{x - z_k} &= \frac{1}{2}\p{\norm{x_{k+1} - z_k}^2 + \norm{x_k - x}^2 - \norm{x_{k+1} - x}^2 - \norm{x_k - z_k}^2} \\
    &\leq \frac{1}{2} \p{\norm{x_k - x}^2 - \norm{x_{k+1} - x}^2} - \frac{1 - \frac{\gamma_k^2}{\beta^2}}{2} \norm{x_k - z_k}^2 \\
    &\leq \frac{1}{2} \p{\norm{x_k - x}^2 - \norm{x_{k+1} - x}^2},
  \end{align*}
thus
$$\p{f + h}\p{z_k} \leq \p{f + h}\p{x} + \frac{1}{2\gamma_k} \p{\norm{x_k - x}^2 - \norm{x_{k+1} - x}^2}.$$ 
Setting 
$$\Gamma_n \defeq \sum_{k = 0}^n \gamma_k \ \mbox{and} \ \zeta_n \defeq \frac{1}{\Gamma_n} \sum_{k = 0}^n \gamma_k z_k,$$ we obtain, by Jensen's inequality in discrete form, that
  \begin{align*}
    \p{f + h}\p{\zeta_n} &\leq \frac{1}{\Gamma_n} \sum_{n = 0}^n \gamma_k \p{f + h}\p{z_k} \\
    &\leq \p{f + h}\p{x} + \frac{1}{2\Gamma_n}\sum_{k = 0}^n \p{\norm{x_k - x}^2 - \norm{x_{k+1} - x}^2}\\
    & = \p{f + h}\p{x} + \frac{1}{2\Gamma_n} \p{\norm{x_0 - x}^2 - \norm{x_{n+1} - x}^2}\\
    & \leq \p{f + h}\p{x} + \frac{\norm{x_0 - x}^2}{2\Gamma_n}.
  \end{align*}
  
\end{remark}

\end{document}